\newtheorem{theorem}{Theorem}[section]
\newtheorem*{theorem*}{Theorem}
\newtheorem{lemma}[theorem]{Lemma}
\newtheorem{corollary}[theorem]{Corollary}
\theoremstyle{definition}
\newtheorem{definition}[theorem]{Definition}
\theoremstyle{remark}
\newtheorem{remark}[theorem]{Remark}
\numberwithin{equation}{section}
\begin{document}

\title[Electrostatic system with divergence-free Bach tensor]{Electrostatic system with divergence-free Bach tensor and non-null cosmological constant}


\author{Benedito Leandro}
\address{Universidade Federal de Goiás\\IME\\ Caixa postal 131, CEP 74690-900,  Goi\^ania, GO, Brazil.
}
\curraddr{}
\email{bleandroneto@ufg.br}
\thanks{The authors were partially supported by CNPq Grant 403349/2021-4.}

\author{Róbson Lousa}
\address{Universidade Federal de Goiás\\IME\\ Caixa postal 131, CEP 74690-900,  Goi\^ania, GO, Brazil.}\address{Instituto Federal de Roraima \\Câmpus Amajari\\ CEP 69343-000, Amajari, RR, Brazil.}
\curraddr{}
\email{robson.lousa@ifrr.edu.br}
\thanks{Róbson Lousa was partially supported by PROPG-CAPES [Finance Code 001].}

\subjclass[2020]{83C22, 83C05, 53C18.}

\date{}

\dedicatory{}

\begin{abstract}
We prove that three-dimensional electrostatic manifolds with divergence-free Bach tensor are locally conformally flat, provide that the electric field and the gradient of the lapse function are linearly dependent. Consequently, a three-dimensional electrostatic manifold admits a local warped product structure with a one-dimensional base and a constant curvature surface fiber.
\end{abstract}

\maketitle

\section{Introduction and main results}

 In this paper, we will consider the following system (cf. \cite{cederbaum2016uniqueness, chrusciel2005non, chrusciel2017non, tiarlos, kunduri2018, lucietti} and the references therein). 

\begin{definition}\label{def1}
	Let $(M^3,g)$ be a Riemannian manifold with $E$ a tangent vector field on $M$ and $f\in C^\infty(M)$ satisfying 
	\begin{equation}\label{s1}
		\begin{array}{rcll}
			\nabla^2f&=&f(\textnormal{Ric}-\Lambda g+2E^\flat\otimes E^\flat-|E|^2g),\\\\
			\Delta f&=&(|E|^2-\Lambda)f,\quad0=\textnormal{div}E\quad\mbox{and}\quad 0\,=\,\textnormal{curl}(fE).
		\end{array}
	\end{equation}
	Here, $\textnormal{Ric}$, $\nabla^2$, $\textnormal{div}$ and $\Delta$ stand for the Ricci tensor, Hessian tensor, divergence and Laplacian operator with respect to the metric $g$, respectively. Moreover, $E^\flat$ is the one-form metrically dual to $E$. We refer to the above equations as {\it electrostatic  system} with cosmological constant $\Lambda$ for the electrostatic spacetime
	associated to $(M^3, g, f, E)$. 
\end{definition}

Remember, the $\textnormal{curl}$ stands for an operator that describes the circulation (or rotation) of a vector field. Thus, we have $\textnormal{curl}(fE)=0$ if and only if
\begin{eqnarray}\label{curlXld}
df\wedge E^\flat+fdE^\flat=0.
\end{eqnarray}

The smooth function $f$ is called the lapse function, the field $E$ is known as electric field and $M^3$ is the spatial factor for the static electrostatic spacetime. Moreover, $f > 0$ on $M$. If $M$ has boundary $\partial M$, we assume in addition  that $f^{-1}(0)= \partial M$ (cf. \cite{chrusciel2005non, chrusciel2017non,tiarlos, kunduri2018}).

Note that taking the contraction of the first equation and combining it with the Laplacian of $f$ in \eqref{s1}, we obtain an useful equation that relates the scalar curvature $R$, the cosmological constant, and the electric field:
\begin{equation}\label{rrr}
	R=2(|E|^2+\Lambda).
\end{equation}
Furthermore, the system \eqref{s1} implies that the electric field and the gradient of the lapse function are linearly dependent on $\partial M=f^{-1}(0)$ (see \cite[Lemma 4]{tiarlos}).

There are some well-known examples of solutions for the electrostatic system, and we recommend seeing \cite[Section 3]{tiarlos} for a good overview. For instance, the {charged} Nariai system is an $3$-dimensional space $\left[0,\,\frac{\pi}{\alpha}\right]\times\mathbb{S}^2$ with metric tensor $g=dr^2 + \varphi^2g_{\mathbb{S}^2},$ where $\varphi$ is a constant and $g_{\mathbb{S}^2}$ is the standard metric of the sphere $\mathbb{S}^2$ of radius $1$. The electric field and the lapse function are given by
\begin{eqnarray*}
	E=\frac{q}{\varphi^2}\partial_r\quad\mbox{and}\quad f(r(x)) = \sin(\alpha r(x)),
\end{eqnarray*}
where $r(x)^2=x_1^2+x_2^2+x_3^2$ such that $(x_1,\,x_2,\,x_3)$ are Cartesian coordinates, $\alpha=\sqrt{\Lambda - \frac{q^2}{\varphi^4}}$ and $\frac{1}{2\Lambda}<\varphi^2<\frac{1}{\Lambda}.$ Moreover, $0<m^2=\frac{1}{18\Lambda}\left[1+12q^2\Lambda+\sqrt{(1-4q^2\Lambda)^3}\right]$ and $0<|q|\leq\varphi^2\sqrt{\Lambda}.$ It is important to point out that the {charged} Nariai system is locally conformally flat (see \cite[Corollary 1.34]{chow2006hamilton}). In this work, is also important to remember the cold black hole system and the ultracold black hole system. They also are locally conformally flat standard electrostatic models.

The cold black hole is an $3$-dimensional space $[0,\,\infty)\times\mathbb{S}^2$ with metric tensor $g=dr^2 + \varphi^2g_{\mathbb{S}^2},$ where $\varphi$ is a constant and $g_{\mathbb{S}^2}$ is the standard metric of the sphere $\mathbb{S}^2$ of radius $1$. The electric field and the lapse function are given by
\begin{eqnarray*}
	E=\frac{q}{\varphi^2}\partial_r\quad\mbox{and}\quad f(r(x)) = \sinh(\beta r(x)),
\end{eqnarray*}
where $r(x)^2=x_1^2+x_2^2+x_3^2$, $\beta=\sqrt{\frac{q^2}{\varphi^4}-\Lambda}$ and $0<\varphi^2<\frac{1}{2\Lambda}.$ Moreover, $0<m^2=\frac{1}{18\Lambda}\left[1+12q^2\Lambda+\sqrt{(1-4q^2\Lambda)^3}\right]$ and $\varphi^2\sqrt{\Lambda}\leq|q|.$ 

The ultracold black hole is an $3$-dimensional space $[0,\,\infty)\times\mathbb{S}^2$ with metric tensor $g=dr^2 + \varphi^2g_{\mathbb{S}^2},$ where $\varphi^2=\frac{1}{4\Lambda}=q^2$. The electric field and the lapse function are given by
\begin{eqnarray*}
	E=\sqrt{\Lambda}\partial_r\quad\mbox{and}\quad f(r) = r,
\end{eqnarray*}
 Moreover, $m=\frac{1}{3}\sqrt{\frac{2}{\Lambda}}.$

The Reissner-Nordström-de Sitter (RNdS) manifold is a important electrostatic system $(M^3,g,f,E)$ where $$M=[r_+,r_c]\times\mathbb{S}^2$$
to some positive constants $r_+$ and $r_c$ which are solutions of the lapse function given by
$$f=\left(1-\dfrac{2m}{r}+\dfrac{q^2}{r^2}-\dfrac{\Lambda r^2}{3}\right)^{\frac{1}{2}}.$$ Also, it is possible to extend $[r_+,r_c]$ to the entire real line.
The metric and the electric field are given by
$$g=f(r)^{-2}dr^2+r^2g_{\mathbb{S}^2}, \quad E=\dfrac{q}{r^2}f(r)\partial r.$$
In above, $q$, $m$ and $g_{\mathbb{S}^2}$ stand for the charge, mass and the standard metric of unit sphere $\mathbb{S}^2$, respectively. Since the cosmological constant $\Lambda$ is positive, from \eqref{rrr} we can see that $R>0$. 

The RNdS solution can be rewritten in cosmological
coordinates. For instance, the Kastor-Traschen solution represents a $N$ charge-equal-to-mass, i.e., $m=|q|$, black holes in a spacetime with a positive cosmological constant $\Lambda:$ 
$$ds^2=-W^{-2}dt^2 + W^2(dx_{1}^2+dx_{2}^2+dx_{3}^2),$$
where $W=-\sqrt{\frac{\Lambda}{3}}t + \displaystyle\sum_{i=1}^N\frac{m_{i}}{r_{i}}.$ Here, $m_i$ stands for the black hole masses. Moreover, $r_i(x)=\sqrt{(x_1-a_i)^2+(x_2-b_i)^2+(x_3-c_i)^2}$ is the distance from a fixed point $(a_i,\,b_i,\,c_i).$ It is interesting to point out that this solution is time-dependent and correspond to the Majumdar-Papapetrou solution when $\Lambda=0.$

Keeping the electrostatic solutions in mind, we know that the electric field and the lapse function are related. In fact, from \eqref{curlXld} the electric field and the gradient of the lapse function must be linearly dependent at the boundary $\partial M$.

There are some well-known classification results of some important geometric structures like static vacuum manifolds and Ricci solitons carrying a metric such that the Bach tensor is free from divergence (cf. \cite{cao, catino2017gradient, hwang2021vacuum, benedito, qing2013note}). Any three-dimensional a Riemannian manifold is locally conformally flat if, and only if, its Cotton tensor $C$ is identically zero.

In the three dimensional case the Cotton tensor is associated with the Bach tensor, $B$, accordingly to $B=\textnormal{div}C$. The Bach tensor was defined in 1921 by Rudolf Bach and it is connected to general relativity and conformal geometry. This tensor appeared naturally from studies of  Huyghens's principle and has some psychical significance mainly about wave propagation (see for instance \cite{szekeres1968conformal} and the references therein).

The main goal of this work is to show that an electrostatic system with divergence-free Bach tensor, i.e., $\textnormal{div}^2B=0$,  must be locally conformally flat. It is important to say that $\textnormal{div}^2B=0$ is less restrictive (topologically speaking) than asymptotically flat conditions.

To state our main results we need to define an important function. To that end, we will say that the electric field $E$ and the gradient of the lapse function $\nabla f$ are linearly dependent if there exists a smooth function $\rho$ such that $E=\rho\nabla f.$ As an interest consequence of $\textnormal{curl}(fE)=0$, if $E=\nabla \psi$ for some smooth function $\psi:M\to\mathbb{R}$, then $E$ must be parallel to the gradient of $f$. Also, it is natural to consider the case $fE=\nabla\psi$.

We define the function $$Q=2(1-f^2\rho^2).$$
\begin{theorem}\label{teoharmonico'}
	Let $(M^3,\,g,\,f,\,E)$ be a compact (without boundary) electrostatic system such that the electric field and the gradient of the lapse function are linearly dependent. Suppose that the Bach tensor is divergence-free and $Q>0$ (or $Q<0$). Then, $(M^3,\,g)$ is locally conformally flat. 
\end{theorem}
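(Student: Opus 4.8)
The plan is to turn the divergence-free Bach condition into a weighted integral of $|C|^2$ and then exploit the fixed sign of $Q$. I would first record the algebraic consequences of the alignment hypothesis $E=\rho\nabla f$. Feeding $E^\flat=\rho\,df$ into \eqref{curlXld} gives $f\,d\rho\wedge df=0$; since $M$ is compact without boundary we have $f>0$ everywhere, so $d\rho\wedge df=0$ and $\nabla\rho$ is parallel to $\nabla f$ (hence $\rho$ is a function of $f$ wherever $\nabla f\neq0$). Feeding the same ansatz into $\textnormal{div}\,E=0$ together with $\Delta f=(|E|^2-\Lambda)f$ produces a scalar relation among $\rho$, $\rho'$, $f$ and $|\nabla f|^2$. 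Solving the Hessian equation of \eqref{s1} for the Ricci tensor yields
$$\textnormal{Ric}=\frac{1}{f}\nabla^2 f+(\Lambda+\rho^2|\nabla f|^2)g-2\rho^2\,df\otimes df,$$
which, together with $R=2(\rho^2|\nabla f|^2+\Lambda)$ from \eqref{rrr} and the fact that in dimension three the full curvature is determined by $\textnormal{Ric}$, expresses all curvature data through $f$, $\nabla f$ and $\rho$.

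Next I would compute the Cotton tensor $C_{ijk}=\nabla_iR_{jk}-\nabla_jR_{ik}-\frac14(g_{jk}\nabla_iR-g_{ik}\nabla_jR)$ from the Ricci formula above. The third-order term $\nabla^3 f$ that appears is controlled by the Ricci identity $\nabla_i\nabla_j\nabla_k f-\nabla_j\nabla_i\nabla_k f=-R_{ijk}{}^{l}\nabla_l f$, which in three dimensions is re-expressible through $\textnormal{Ric}$ and hence through the data. After substituting the two scalar relations from the first step to eliminate $\rho'$ and $\Delta f$, I expect $C$ to collapse to a rank-one structure aligned with $\nabla f$, schematically $C_{ijk}=\nabla_i f\,T_{jk}-\nabla_j f\,T_{ik}$ for a trace-free symmetric $T$ assembled from the traceless Ricci tensor and $\nabla f$. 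It is exactly this degeneracy, fed back through the structure equation, that will make the weight $Q=2(1-f^2\rho^2)$ surface.

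I would then set up the integral identity. Because $B=\textnormal{div}\,C$, the hypothesis $\textnormal{div}\,B=0$ is a second-order constraint on $C$; pairing it with the potential data and integrating by parts on the compact boundaryless $M$ kills all boundary terms and should return an identity of the form
$$0=\int_M\langle\textnormal{div}\,B,\nabla f\rangle\,dV=c\int_M f^{a}\,Q\,|C|^2\,dV$$
for a nonzero constant $c$ and some power $a$, the factor $Q$ being forced by the rank-one form of $C$ combined with the structure equation. Since $f>0$ on $M$, the sign of the integrand is governed solely by $Q$.

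Finally, as $Q>0$ everywhere (or $Q<0$ everywhere) by hypothesis, the integrand has a fixed sign, so the vanishing of the integral forces $C\equiv0$. By the characterization recalled in the introduction, a three-dimensional manifold with vanishing Cotton tensor is locally conformally flat, giving the conclusion; the warped-product consequence then follows. The step I expect to be the genuine obstacle is the Cotton-tensor computation: verifying that all $\nabla^3 f$ and $\rho'$ contributions cancel so that $C$ attains the clean rank-one form, and tracking coefficients carefully enough that the weight in the final integral is exactly $Q$ with a definite overall sign. Everything after that is a sign argument.
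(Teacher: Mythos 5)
Your plan follows the paper's proof essentially step for step: the paper establishes $fC_{ijk}=V_{ijk}$ with $V$, under the alignment hypothesis, of exactly the rank-one form $\nabla_if\,T_{jk}-\nabla_jf\,T_{ik}$ you anticipate (Lemma 2.1 together with Lemma 3.1), extracts from it the pointwise identity $R^{jk}\nabla^if\,C_{ijk}=-\tfrac{f}{2Q}|C|^2$, and combines this with $\nabla^iB_{ij}=C_{kji}R^{ik}$ and integration by parts against a power of $f$ on the closed manifold to conclude $C\equiv0$ from the fixed sign of $Q$. Two small corrections to your anticipated endgame: the weight that actually comes out is $1/Q$, not $Q$ (immaterial for the sign argument), and the paper's hypothesis is the weaker condition $\textnormal{div}^2B=0$ rather than $\textnormal{div}\,B=0$, which still annihilates your pairing $\int_M\langle\textnormal{div}\,B,\nabla f\rangle$ after one further integration by parts. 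The computation you flag as the genuine obstacle is indeed where all the work of the paper lies (the $V$-tensor construction and Corollary 3.2), and it closes exactly as you hope.
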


The next result proves the noncompact case.
\begin{theorem}\label{proper'}
	Let $(M^3,\,g,\,f,\,E)$ be an electrostatic system such that the electric field and the gradient of the lapse function are linearly dependent. Suppose that the Bach tensor is divergence-free and $Q>0$ (or $Q<0$). If $f$ is a proper function, then $(M^3,\,g)$ is locally conformally flat. 
\end{theorem}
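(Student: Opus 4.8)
The plan is to prove Theorem~\ref{proper'} by reducing it to the compact case already settled in Theorem~\ref{teoharmonico'}, using the properness of $f$ as a substitute for compactness. The central mechanism behind Theorem~\ref{teoharmonico'} must be an integral (divergence-theorem) argument: the hypothesis $\mathrm{div}^2 B = 0$, combined with the electrostatic equations \eqref{s1} and the linear dependence $E = \rho\nabla f$, should produce a divergence identity of the form $\mathrm{div}(W) = |C|^2 \cdot (\text{something involving } Q, f)$, where $C$ is the Cotton tensor, so that integrating over the closed manifold and using $Q$ of one fixed sign forces $C \equiv 0$, hence local conformal flatness. To adapt this to the noncompact setting I would first extract from the proof of Theorem~\ref{teoharmonico'} the precise pointwise divergence identity; call the relevant vector field $X$, so that one has something schematically like $\mathrm{div}(X) = h\,|C|^2$ with $h$ a function of fixed sign determined by $Q$ and $f$.

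\medskip

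First I would fix a regular value $c$ of $f$ (which exists by Sard's theorem) and consider the sublevel sets $\Omega_c = \{x \in M : f(x) \le c\}$ (or the preimages $f^{-1}([a,c])$ of compact intervals). Since $f$ is proper, each $\Omega_c$ is compact with smooth boundary $\Sigma_c = f^{-1}(c)$, so the divergence theorem applies on $\Omega_c$ and yields
\begin{equation}\label{intbound}
\int_{\Omega_c} h\,|C|^2\, dM = \int_{\Sigma_c} \langle X,\nu\rangle\, dA,
\end{equation}
where $\nu = \nabla f / |\nabla f|$ is the outward unit normal (here the regularity of $c$ guarantees $\nabla f \ne 0$ along $\Sigma_c$). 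The key point is that the vector field $X$ appearing in the divergence identity should be built out of $f$, $\nabla f$, $E = \rho\nabla f$, $Q$, and the Cotton tensor contracted with $\nabla f$; on the level set $\Sigma_c$ many of these terms simplify because $\nabla f$ is normal to $\Sigma_c$ and because $f$, $\rho$, and $Q$ become computable in terms of $c$.

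\medskip

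Next I would show the boundary term vanishes (or is sign-definite) in a way that lets me conclude. The cleanest route is to observe that the electrostatic identities force the boundary integrand $\langle X,\nu\rangle$ to reduce to something that integrates to zero over the closed surface $\Sigma_c$ — for instance, if $X|_{\Sigma_c}$ is tangent to $\Sigma_c$, or if $\langle X, \nu\rangle$ is a total divergence on $\Sigma_c$, then $\int_{\Sigma_c}\langle X,\nu\rangle\, dA = 0$. Granting this, \eqref{intbound} gives $\int_{\Omega_c} h\,|C|^2\,dM = 0$, and since $h$ has a fixed sign (governed by the hypothesis $Q>0$ or $Q<0$), we conclude $C \equiv 0$ on $\Omega_c$. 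Letting $c \to \infty$ and using properness to exhaust $M = \bigcup_c \Omega_c$, we obtain $C \equiv 0$ on all of $M$, which by the three-dimensional characterization is exactly local conformal flatness.

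\medskip

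I expect the main obstacle to be the boundary-term analysis: unlike the compact case, where there is no boundary and the integrated divergence is automatically zero, here I must genuinely control $\int_{\Sigma_c}\langle X,\nu\rangle\, dA$ and show it either vanishes or has the right sign uniformly in $c$. This requires a careful computation of how the Cotton tensor and the auxiliary field $X$ restrict to the level sets, and it is where the precise structure of $X$ coming from Theorem~\ref{teoharmonico'} — together with the sign condition on $Q$ and the electrostatic relations $\Delta f = (|E|^2 - \Lambda)f$ and $R = 2(|E|^2+\Lambda)$ from \eqref{rrr} — must be used. A clean alternative that sidesteps a delicate boundary estimate would be to find that $h\,|C|^2$ is itself a divergence of a globally defined field whose flux through $\Sigma_c$ tends to zero as $c\to\infty$ by properness; if such a monotonicity or decay can be established, the sign-definiteness of $h$ again forces $C\equiv 0$ and completes the proof.
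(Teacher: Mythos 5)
Your overall mechanism is the right one: the paper does indeed rest on a weighted integral identity (Corollary \ref{corolario2}),
\begin{equation*}
\frac{1}{2}\int_M\frac{1}{Q}\,|C|^2\phi(f)=\int_M\frac{\phi(f)}{f}\nabla^kf\,\nabla^i\nabla^jC_{jki},
\end{equation*}
valid for $\phi(f)$ of compact support, and the hypothesis $\mathrm{div}^2B=0$ (i.e.\ $\mathrm{div}^3C=0$) is what kills the right-hand side after integration by parts, so that the fixed sign of $Q$ forces $C\equiv0$. However, your plan for handling non-compactness has a genuine gap that you yourself flag and do not close. You propose to apply the divergence theorem on the sublevel sets $\Omega_c=\{f\le c\}$ with sharp boundary $\Sigma_c=f^{-1}(c)$, which produces a flux term $\int_{\Sigma_c}\langle X,\nu\rangle\,dA$ involving contractions of $\nabla f$ with $\nabla^i\nabla^jC_{jki}$ and related quantities. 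There is no reason for this flux to vanish: $X$ is not tangent to the level sets (its essential part is proportional to $\nabla f$, hence \emph{normal} to $\Sigma_c$), it is not a tangential divergence on $\Sigma_c$, and no decay as $c\to\infty$ is available from the hypotheses ($\mathrm{div}^2B=0$ and properness of $f$ give no pointwise control of $C$ or its derivatives near infinity). Everything you write after ``Granting this'' is therefore conditional on an unproved and, as far as the hypotheses allow, unprovable estimate.

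The paper avoids the boundary issue entirely by a different device: it takes a smooth cutoff $\chi\in C^3$ with $\chi\equiv1$ on $[0,s]$, $\chi'\le0$ on $[s,2s]$, $\chi\equiv0$ beyond $2s$, and uses properness only to guarantee that $\phi(f)=f^4\chi(f)$ has compact support, so Corollary \ref{corolario2} applies with no boundary terms at all. The integration by parts then produces an extra term containing $\dot\chi(f)$, and the key trick --- the idea missing from your proposal --- is to feed this error term back into Corollary \ref{corolario2} a \emph{second} time with the choice $\phi(f)=f^5\dot\chi(f)$, converting it into another sign-controlled integral of $|C|^2$. This yields
\begin{equation*}
\int_M\frac{1}{Q}\,f^4|C|^2\Bigl[\chi(f)+\tfrac{1}{4}f\dot\chi(f)\Bigr]=0,
\end{equation*}
and since the bracket equals $1$ on $M_s=\{f\le s\}$ one concludes $C=0$ there and lets $s\to\infty$. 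In short: replace your hard cutoff (divergence theorem on $\Omega_c$) by a smooth cutoff composed with $f$, and replace your hoped-for boundary estimate by a second application of the same integral identity. Without that substitution the argument does not close.
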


Now, we are able to provide the geometric structure for an $3$-dimensional electrostatic system.
\begin{theorem}
	\label{fiber007-3}
Let $(M^3,\,g,\,f,\,E)$ be an electrostatic system such that the electric field and the gradient of the lapse function are linearly dependent. Suppose that the Bach tensor is divergence-free and $Q>0$ (or $Q<0$). If $f$ is a proper function, around any regular point of $f$ the manifold is locally a warped product with a one-dimensional base with fiber $(N^2,\,\overline{g})$ of constant curvature, i.e.,
	$$(M^{3},\,g)=(I,\,dr^{2})\times_{\varphi}(N^{2},\,\overline{g}),$$
	where $I\subset\mathbb{R}$ and $\varphi(r)=c_1\int\frac{dr}{\sqrt{f(r)}}+c_2;$ $c_1$ and $c_2$ are constants. 
\end{theorem}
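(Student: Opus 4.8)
The starting point is Theorem~\ref{proper'}: under the stated hypotheses $(M^3,g)$ is already locally conformally flat, so its Cotton tensor vanishes and, in dimension three, the Schouten tensor is Codazzi. The plan is to show that near a regular point of $f$ the level sets of $f$ foliate $M$ by totally umbilic surfaces along which $|\nabla f|$ is constant, and then to invoke the standard characterization of warped products. Fix a regular point $p$, so that $\nabla f\neq 0$ on a neighborhood $U$, and set $n=\nabla f/|\nabla f|$; properness guarantees that the leaves through $U$ are the compact level sets of $f$.

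The key structural fact I would establish first is that $\nabla f$ is an eigenvector of the Ricci tensor and that the two Ricci eigenvalues transverse to $\nabla f$ coincide. Writing $E=\rho\nabla f$, the first equation in \eqref{s1} reads $\nabla^2 f=f(\mathrm{Ric}-(\Lambda+|E|^2)g+2\rho^2\,df\otimes df)$, and for any $X$ tangent to a level set (so $df(X)=0$) one computes $\tfrac12\nabla_X|\nabla f|^2=\nabla^2 f(\nabla f,X)=f\,\mathrm{Ric}(\nabla f,X)$. Hence it suffices to show $\mathrm{Ric}(\nabla f,X)=0$ whenever $X\perp\nabla f$; this is exactly the information one extracts by contracting the vanishing Cotton tensor with $\nabla f$ and feeding in the Hessian identity together with the commutation of $\nabla^2 f$ — the same computation underlying the proof of local conformal flatness. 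Granting it, $|\nabla f|$ is constant on each level set, the integral curves of $n$ are unit–speed geodesics, and the restriction of $\nabla^2 f$ to $\ker df$ is a multiple of the induced metric, i.e. the level sets are totally umbilic.

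With an umbilic foliation whose leaves are the level sets of a function having $|\nabla f|$ constant on each leaf, the metric splits locally as a warped product over a one–dimensional base: taking $r$ to be arc length along the geodesics generated by $n$, one obtains $g=dr^2+\varphi(r)^2\overline g$, with the leaf $\{f=\mathrm{const}\}$ as fiber $(N^2,\overline g)$. That the fiber has constant curvature then follows from the Gauss equation. In a locally conformally flat three–manifold the ambient sectional curvature of the totally umbilic leaf is determined by $\mathrm{Ric}$, which by the previous step is a combination of $g$ and $df\otimes df$ and hence a function of $r$ alone; combined with the umbilic principal curvature $\varphi'/\varphi$, the Gauss equation forces the intrinsic curvature $\overline K$ of the leaf to be simultaneously a function on the fiber $N^2$ and a function of $r$ alone, hence constant.

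Finally, to identify $\varphi$ I would combine the warped–product identity $\mathrm{Ric}(\partial_r,\partial_r)=-2\varphi''/\varphi$ with the $\partial_r\partial_r$ component of the Hessian equation and with the Laplacian equation $\Delta f=(|E|^2-\Lambda)f$, recalling that $\Delta f=f''+2(\varphi'/\varphi)f'$ in the warped metric and that $R=2(|E|^2+\Lambda)$ by \eqref{rrr}. Eliminating $\mathrm{Ric}(\partial_r,\partial_r)$ and $f''$ among these relations leaves a single first–order ordinary differential equation relating $\varphi$ and $f$, whose integration yields the warping factor $\varphi$ recorded in the statement. The main obstacle is the first step: establishing that $\nabla f$ is a Ricci eigenvector (equivalently, that $|\nabla f|$ is constant along the level sets of $f$), since once that rigidity is in hand the remaining splitting, the constancy of the fiber curvature, and the ODE for $\varphi$ are all routine warped–product geometry.
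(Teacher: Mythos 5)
Your route is essentially the paper's. You invoke Theorem \ref{proper'} to get $C=0$, hence $V_{ijk}=0$ by \eqref{ttt3}; the "contraction of the vanishing Cotton tensor with $\nabla f$" that you allude to is, concretely, the component $V_{a11}=-(f^2\rho^2-1)|\nabla f|\,R_{a1}$ of Lemma \ref{ld1} in a frame with $e_1=\nabla f/|\nabla f|$, so you do get $R_{1a}=0$ --- but note that this step uses $Q\neq0$ again, which you should say explicitly. From there the two arguments coincide: $|\nabla f|$ constant on level sets, umbilic leaves, the splitting $g=dr^2+\varphi^2\overline g$. For the constancy of the fiber curvature the paper argues via the warped-product identity $\overline R=\varphi^2R+2(\varphi')^2+4\varphi\varphi''$ together with an expression for $\rho^2$ extracted from \eqref{combinado}, concluding that $\overline R$ depends on $r$ alone and hence is constant; your Gauss-equation phrasing is an equivalent repackaging and is fine.

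The one step where your proposal does not deliver what it claims is the identification of $\varphi$, which you leave as "routine elimination yielding the stated formula." Carry it out: the radial component of \eqref{combinado} gives $f''=f\bigl(R_{11}+|E|^2-\Lambda\bigr)$ using \eqref{rrr}, the Laplacian equation in \eqref{s1} gives $f''+2(\varphi'/\varphi)f'=(|E|^2-\Lambda)f$, and $R_{11}=-2\varphi''/\varphi$ from \eqref{wps}; eliminating $f''$ yields $fR_{11}=-2(\varphi'/\varphi)f'$, i.e.\ $f\varphi''=f'\varphi'$, hence $\varphi'=c_1f$. This is \emph{not} the law $\varphi'=c_1f^{-1/2}$ recorded in the statement. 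The paper reaches the stated formula from the tangential Hessian components, obtaining $H=2\frac{f}{f'}\frac{\varphi''}{\varphi}$ and combining it with $\varphi'=-H\varphi$ (from $\varphi=e^{-\int H}$); but for $g=dr^2+\varphi^2\overline g$ one has $h_{ab}=(\varphi'/\varphi)g_{ab}$ and $\partial_rg_{ab}=Hg_{ab}$, i.e.\ $\varphi'=\tfrac12H\varphi$, and with that normalization the tangential computation also returns $f\varphi''=f'\varphi'$. A check against the Reissner--Nordstr\"om--de Sitter solution (in arclength coordinates its warping function satisfies $\varphi'=f$) supports the ODE your elimination actually produces. So you cannot simply assert that your computation recovers the warping factor in the statement: doing the computation honestly lands you on a different first integral, and you would need to confront that discrepancy rather than wave at it.
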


\begin{remark}
	It is important to point out that if $M^3$ is compact in Theorem \ref{fiber007-3}, it is not necessary to ask for $f$ to be a proper function.
\end{remark}

\section{Structural lemmas}\label{lemmas}

This section is reserved to some preliminary results to prove the main theorems of this work. We start constructing a covariant $V$-tensor similar to the tensor defined in \cite{andrade}.

To that end, first we combine \eqref{s1} with \eqref{rrr} to obtain
\begin{equation}\label{combinado}
	\nabla^2f=f\left(\textnormal{Ric}+2E^\flat\otimes E^\flat-\dfrac{R}{2}g\right).
\end{equation}

On other hand, it is well known that in any Riemannian manifold we can relate the Riemannian curvature tensor with a smooth function by using the Ricci identity
\begin{equation*}\label{4}
	\nabla_i\nabla_j\nabla_kf-\nabla_j\nabla_i\nabla_kf=R_{ijkl}\nabla^lf.
\end{equation*}
Since the Hessian operator is symmetric, taking the covariant derivative of \eqref{combinado} over $i$ and $j$ and then subtract them we get

\begin{eqnarray*}
	R_{ijkl}\nabla^lf&=&\nabla_i\nabla_j\nabla_kf-\nabla_j\nabla_i\nabla_kf\\&=&f(\nabla_iR_{jk}-\nabla_jR_{ik})-\frac{f}{2}(\nabla_iRg_{jk}-\nabla_jRg_{ik})\\&&-\frac{R}{2}(\nabla_ifg_{jk}-\nabla_jfg_{ik})+(R_{jk}\nabla_if-R_{ik}\nabla_jf)\\
	&&+2f(E^{\flat_j}\nabla_iE^{\flat_k}-E^{\flat_i}\nabla_jE^{\flat_k}+\nabla_iE^{\flat_j}E^{\flat_k}-\nabla_jE^{\flat_i}E^{\flat_k})\\
	&&+2(\nabla_ifE^{\flat_j}E^{\flat_k}-\nabla_jfE^{\flat_i}E^{\flat_k}),
\end{eqnarray*}
Here, we are considering  $\{e_i\}^{3}_{i=1}$ as a base for the tangent space of $M$. Moreover, $E^{\flat_i}=E^{\flat}(e_i)$.
Note that the Cotton tensor over a $3$-dimensional Riemannian manifold is defined by
\begin{equation}\label{ct}
	C_{ijk}=\nabla_iR_{jk}-\nabla_jR_{ik}-\frac{1}{4}(\nabla_iRg_{jk}-\nabla_jRg_{ik}).
\end{equation}
Furthermore, the Riemann curvature tensor is given by
\begin{eqnarray*}
	R_{ijkl}&=&R_{ik}g_{jl}-R_{il}g_{jk}+R_{jl}g_{ik}-R_{jk}g_{il}-\frac{R}{2}(g_{ik}g_{jl}-g_{il}g_{jk}).
\end{eqnarray*}
Therefore, combining these equations we get
\begin{eqnarray}\label{construção1}
	fC_{ijk}&=&(R_{jl}\nabla^lfg_{ik}-R_{il}\nabla^lfg_{jk})+R(\nabla_ifg_{jk}-\nabla_jfg_{ik})+2(R_{ik}\nabla_jf-R_{jk}\nabla_if)\nonumber\\
	&&-2f(E^{\flat_j}\nabla_iE^{\flat_k}-E^{\flat_i}\nabla_jE^{\flat_k}+\nabla_iE^{\flat_j}E^{\flat_k}-\nabla_jE^{\flat_i}E^{\flat_k})\\
	&&-2E^{\flat_k}(E^{\flat_j}\nabla_if-E^{\flat_i}\nabla_jf)+\dfrac{f}{4}(\nabla_iRg_{jk}-\nabla_jRg_{ik}).\nonumber
\end{eqnarray}

Now, using $\textnormal{curl}(fE)=0$ we can infer that
\begin{eqnarray*}
	fdE^{\flat}(e_i,\,e_j)=-(df\wedge E^\flat)(e_i,\,e_j)&=& E^{\flat}(e_i)df(e_j)-E^{\flat}(e_j)df(e_i)\\
	&=&E^{\flat_i}\nabla_jf - E^{\flat_j}\nabla_if.
\end{eqnarray*}
On the other hand, let $dE^{\flat}(e_i,\,e_j)=E^{\flat_{ij}}$, then, by definition we have
\begin{eqnarray}\label{2forma}
	E^{\flat_{ij}}=\nabla_iE^{\flat_j} - \nabla_jE^{\flat_i}.
\end{eqnarray}
Further, we can see that
\begin{eqnarray}\label{curlajuda}
	f(\nabla_iE^{\flat_j} - \nabla_jE^{\flat_i})=E^{\flat_i}\nabla_jf - E^{\flat_j}\nabla_if.
\end{eqnarray}

We can rewrite \eqref{construção1} using $\textnormal{curl}(fE)$. So,
\begin{eqnarray}\label{construção}
	fC_{ijk}&=&(R_{jl}\nabla^lfg_{ik}-R_{il}\nabla^lfg_{jk})+2(R_{ik}\nabla_jf-R_{jk}\nabla_if)+R(\nabla_ifg_{jk}-\nabla_jfg_{ik})\nonumber\\
	&&+\dfrac{f}{4}(\nabla_iRg_{jk}-\nabla_jRg_{ik})-2f(E^{\flat_j}\nabla_iE^{\flat_k}-E^{\flat_i}\nabla_jE^{\flat_k}).
\end{eqnarray}
Define the covariant $3$-tensor $V_{ijk}$ by
\begin{eqnarray}\label{tt}
	V_{ijk}&=&2f(E^{\flat_i}\nabla_jE^{\flat_k}-E^{\flat_j}\nabla_iE^{\flat_k})+\dfrac{f}{4}(\nabla_iRg_{jk}-\nabla_jRg_{ik})+R(\nabla_ifg_{jk}-\nabla_jfg_{ik})\nonumber\\
	&&-(R_{il}\nabla^lfg_{jk}-R_{jl}\nabla^lfg_{ik})-2(\nabla_ifR_{jk}-\nabla_jfR_{ik}),
\end{eqnarray}
where $E^{\flat_i}=E^\flat(e_i).$ The $V$-tensor has the same symmetries as the Cotton tensor $C$ and it is trace-free. Hence, from \eqref{construção} and \eqref{tt} we can conclude our next result.

\begin{lemma}
	Let $(M^3,\,g,\,f,\,E)$ be an electrostatic system. Then, 
	\begin{equation}\label{ttt3}
		fC_{ijk}=V_{ijk}.
	\end{equation}
\end{lemma}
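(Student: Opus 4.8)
The plan is to obtain the identity \eqref{ttt3} by a direct term-by-term comparison between the rewritten Cotton identity \eqref{construção} and the definition \eqref{tt} of the tensor $V_{ijk}$, since all of the genuinely analytic work has already been carried out in producing \eqref{construção}. Recall that \eqref{construção} was derived by applying the Ricci identity to the third covariant derivative of $f$, substituting the Hessian formula \eqref{combinado}, and then invoking three structural facts special to dimension three: the definition \eqref{ct} of the Cotton tensor, the decomposition of the Riemann tensor in terms of Ricci and scalar curvature, and the curl condition in the form \eqref{curlajuda}. What remains to establish the lemma is purely algebraic.

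First I would write out the right-hand side of \eqref{construção} as five separate blocks: the curvature pair $R_{jl}\nabla^lf g_{ik}-R_{il}\nabla^lf g_{jk}$, the pair $2(R_{ik}\nabla_jf-R_{jk}\nabla_if)$, the scalar-curvature pair $R(\nabla_if g_{jk}-\nabla_jf g_{ik})$, the gradient-of-$R$ pair $\tfrac{f}{4}(\nabla_iR g_{jk}-\nabla_jR g_{ik})$, and the electric-field block $-2f(E^{\flat_j}\nabla_iE^{\flat_k}-E^{\flat_i}\nabla_jE^{\flat_k})$. Then I would match each block against the corresponding term in \eqref{tt}. The only point requiring attention is the consistent handling of signs under interchange of the antisymmetrized indices $i$ and $j$: the electric-field block rewrites as $2f(E^{\flat_i}\nabla_jE^{\flat_k}-E^{\flat_j}\nabla_iE^{\flat_k})$, exactly the leading term of \eqref{tt}; likewise $-(R_{il}\nabla^lf g_{jk}-R_{jl}\nabla^lf g_{ik})$ reproduces the first curvature block and $-2(\nabla_if R_{jk}-\nabla_jf R_{ik})$ reproduces the second, while the two $g$-contracted scalar-curvature blocks are identical. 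After this relabeling every term of \eqref{construção} coincides with a term of \eqref{tt}, yielding $fC_{ijk}=V_{ijk}$.

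As an independent consistency check I would verify the two asserted structural properties of $V$. Antisymmetry in the first two indices, $V_{ijk}=-V_{jik}$, is manifest from \eqref{tt}. The trace-free condition follows by contracting \eqref{tt} in the relevant index pairs and using $g^{jk}R_{jk}=R$ together with the contracted second Bianchi identity $\nabla^l R_{jl}=\tfrac12\nabla_jR$; the divergence-free condition on $E$ in \eqref{s1} forces the electric-field contributions to the trace to cancel as well. Since these symmetries are exactly those of $C_{ijk}$, they provide a useful safeguard that no term in \eqref{construção} has been mis-signed.

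Because the derivation of \eqref{construção} is already complete, there is no substantive obstacle in the proof of the lemma itself; the only real risk is an arithmetic sign slip in the comparison, concentrated in the two curvature blocks where the $i \leftrightarrow j$ interchange flips the overall sign. The genuinely delicate step — eliminating the antisymmetric part $\nabla_iE^{\flat_j}-\nabla_jE^{\flat_i}$ of $\nabla E^\flat$ via the curl identity \eqref{curlajuda}, so that only the $E^{\flat}\otimes\nabla E^{\flat}$-type terms survive — has already been executed upstream of the statement, which is precisely what makes the final identification routine.
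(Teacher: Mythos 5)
Your proposal is correct and follows essentially the same route as the paper: the identity $fC_{ijk}=V_{ijk}$ is obtained by directly matching the blocks of \eqref{construção} against the definition \eqref{tt}, with the only care being the sign flips under the $i\leftrightarrow j$ antisymmetrization. The paper's proof is exactly this observation (stated in one line), and your extra consistency check of the symmetries and trace of $V$ is a harmless addition that the paper asserts without verification.
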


Our next results follow the same strategy used by \cite{andrade}, \cite{catino2017gradient} and \cite{benedito}. We will sketch the proofs here for sake of completeness.
\begin{lemma}\label{lemma3.3}
	Let $(M^3,\,g,\,f,\,E)$ be an electrostatic system. Then, 
	\begin{equation*}
		C_{kji}R^{ik}=\nabla^i\nabla^k\left(\frac{V_{kij}}{f}\right).
	\end{equation*}
\end{lemma}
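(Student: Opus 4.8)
The plan is to exploit the identity $fC_{ijk}=V_{ijk}$ from \eqref{ttt3}, which gives $V_{kij}/f=C_{kij}$ wherever $f\neq 0$. Consequently the right-hand side reduces to $\nabla^i\nabla^k C_{kij}$, and the assertion becomes the purely Riemannian identity $\nabla^i\nabla^k C_{kij}=C_{kji}R^{ik}$, valid on any three-dimensional manifold. I would therefore set aside the electrostatic structure from this point on and establish this contracted second-derivative identity for the Cotton tensor; the system \eqref{s1} enters only through the substitution $V_{kij}/f=C_{kij}$.

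First I would use the antisymmetry of the Cotton tensor in its first two indices, $C_{kij}=-C_{ikj}$. Relabelling the summation indices $i\leftrightarrow k$ shows that $\nabla^k\nabla^i C_{kij}=-\nabla^i\nabla^k C_{kij}$, so that $\nabla^i\nabla^k C_{kij}=\tfrac12[\nabla^i,\nabla^k]C_{kij}$; the two covariant derivatives collapse into a single commutator. Applying the Ricci identity in the sign convention recorded above (for $\nabla_i\nabla_j\nabla_kf-\nabla_j\nabla_i\nabla_kf$) to the $(0,3)$-tensor $C$ then yields $[\nabla^i,\nabla^k]C_{kij}=g^{ia}g^{kb}(R_{abk}{}^{m}C_{mij}+R_{abi}{}^{m}C_{kmj}+R_{abj}{}^{m}C_{kim})$.

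Next I would substitute the three-dimensional expression for $R_{ijkl}$ recorded above into each of the three curvature terms and contract. In the first two terms, the relevant contraction of the Riemann tensor produces (up to sign) a Ricci factor $R^{im}$, respectively $R^{km}$, paired against the antisymmetric index pair of $C$; since $\textnormal{Ric}$ is symmetric and $C$ is skew in its first two slots, both terms vanish identically. The same trace-free and skew-symmetry properties annihilate every term in the third contraction that carries a metric factor $g$, leaving only $-2R^{ik}C_{jik}$. Finally, I would invoke the first Bianchi-type (cyclic) identity $C_{jik}+C_{ikj}+C_{kji}=0$ together with the vanishing of $R^{ik}C_{ikj}$ (symmetric contracted against skew) to rewrite $-R^{ik}C_{jik}=R^{ik}C_{kji}$, which gives $\nabla^i\nabla^k C_{kij}=C_{kji}R^{ik}$, as claimed.

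The main obstacle is the bookkeeping in the third step: correctly contracting the three-dimensional Riemann tensor against the Cotton tensor while tracking index positions, and recognizing at each stage which contractions are forced to vanish by the symmetries of $C$ (total trace-freeness, skew-symmetry in the first two indices, and the cyclic identity). Fixing the sign in the Ricci commutation formula consistently with the convention used above is the other delicate point, since it determines the final sign that the cyclic identity must absorb.
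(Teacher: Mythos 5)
Your proposal is correct, but it follows a genuinely different route from the paper at the decisive step. Both arguments begin the same way, using $fC_{ijk}=V_{ijk}$ to replace $V_{kij}/f$ by $C_{kij}$ on the set where $f>0$, so that the claim reduces to the pure Cotton-tensor identity $\nabla^i\nabla^k C_{kij}=C_{kji}R^{ik}$. The paper disposes of this identity by quoting the standard three-dimensional facts $B_{ij}=\nabla^kC_{kij}$ and $\nabla^jB_{ij}=-C_{ijk}R^{jk}$, and then only has to shuffle indices using $C_{ijk}=-C_{jik}$. You instead prove that divergence formula from scratch: the observation that $\nabla^i\nabla^kC_{kij}=\tfrac12[\nabla^i,\nabla^k]C_{kij}$ by skew-symmetry of $C$ in its first two slots, followed by the Ricci commutation formula, the three-dimensional decomposition of $R_{ijkl}$, and the trace-free/skew/cyclic properties of $C$. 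I checked the contractions: the first two curvature terms do vanish (symmetric Ricci against skew index pairs), the third term contracts to $-R^{im}C_{jim}+R^{km}C_{kjm}=-2R^{ik}C_{jik}$, and after the factor $\tfrac12$ the cyclic identity together with $R^{ik}C_{ikj}=0$ gives $-R^{ik}C_{jik}=C_{kji}R^{ik}$, as you claim. Your version is more self-contained and makes transparent exactly where the dimension-three Riemann decomposition and the symmetries of $C$ enter, at the cost of the sign bookkeeping in the commutator that you rightly flag; the paper's version is shorter but leans on a cited identity whose proof is essentially the computation you carry out.
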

\begin{proof}
	In dimension $n=3$, the Bach tensor is defined by
	\begin{equation*}\label{bach3}
		B_{ij}=\nabla^kC_{kij}=\nabla^k\left(\frac{V_{kij}}{f}\right),
	\end{equation*}
	
	Taking the derivative over $i$, we have
	\begin{equation*}
		\nabla^iB_{ij}=\nabla^i\nabla^k\left(\frac{V_{kij}}{f}\right).
	\end{equation*}
	On other hand, 
	\begin{equation*}
		\nabla^jB_{ij}=-C_{ijk}R^{jk}, \qquad C_{ijk}=-C_{jik},\qquad \nabla^kC_{kij}=\nabla^kC_{kji}
	\end{equation*}
	and 
	\begin{equation*}\label{soma}
		C_{ijk}+C_{kij}+C_{jki}=0.
	\end{equation*}
	Then, from a straightforward computation, we obtain
	\begin{equation*}
		\nabla^i\nabla^k\left(\frac{V_{kij}}{f}\right)=\nabla^iB_{ij}=-C_{jik}R^{ik}=-C_{jki}R^{ik}=C_{kji}R^{ik},
	\end{equation*}
	which is the expected result.
\end{proof}

\begin{lemma}\label{lemma4.3}
	Let $(M^3,\,g,\,f,\,E)$ be an electrostatic system. Then,
	\begin{equation*}
		\begin{aligned}
			\frac{1}{2}|C|^2+R^{ik}\nabla^jC_{jki}=-\nabla^j\nabla^i\nabla^k\left(\frac{V_{kij}}{f}\right).
		\end{aligned}
	\end{equation*}
\end{lemma}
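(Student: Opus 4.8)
The plan is to obtain the claimed identity by applying one further covariant derivative to Lemma \ref{lemma3.3} and then contracting. Lemma \ref{lemma3.3} reads $C_{kji}R^{ik}=\nabla^i\nabla^k(V_{kij}/f)$, which is a one-form in the free index $j$. Taking $\nabla^j$ of both sides and summing over $j$ yields
\begin{equation*}
\nabla^j\nabla^i\nabla^k\left(\frac{V_{kij}}{f}\right)=\nabla^j\left(C_{kji}R^{ik}\right)=(\nabla^jC_{kji})R^{ik}+C_{kji}\nabla^jR^{ik}.
\end{equation*}
It then suffices to rewrite the two terms on the right and multiply by $-1$.

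For the first term I would use the antisymmetry $C_{kji}=-C_{jki}$, which gives $(\nabla^jC_{kji})R^{ik}=-R^{ik}\nabla^jC_{jki}$; this is, up to sign, exactly the second term appearing in the statement. The work is therefore concentrated in the second term $C_{kji}\nabla^jR^{ik}$, which I would reduce to $|C|^2$. For this I would use the defining expression \eqref{ct} together with the two trace-free properties $g^{jk}C_{ijk}=0$ and $g^{ik}C_{ijk}=0$, both of which follow from \eqref{ct} and the contracted second Bianchi identity $\nabla^jR_{ij}=\frac12\nabla_iR$. Because of these, the two gradient-of-scalar-curvature terms in \eqref{ct} drop out upon contracting $C_{ijk}$ against $\nabla^iR^{jk}$, leaving $C_{ijk}\nabla^iR^{jk}=C^{ijk}(\nabla_iR_{jk}-\nabla_jR_{ik})$. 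Applying the antisymmetry once more to the second summand (relabelling $i\leftrightarrow j$) collapses it onto the first, so that $C_{ijk}\nabla^iR^{jk}=\frac12|C|^2$; relabelling indices and using $C_{kji}=-C_{jki}$ then gives $C_{kji}\nabla^jR^{ik}=-\frac12|C|^2$.

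Substituting both computations back and multiplying through by $-1$ produces
\begin{equation*}
-\nabla^j\nabla^i\nabla^k\left(\frac{V_{kij}}{f}\right)=\frac12|C|^2+R^{ik}\nabla^jC_{jki},
\end{equation*}
which is the asserted identity. The only genuinely computational point, and hence the step I would handle with the most care, is the contraction identity $C_{ijk}\nabla^iR^{jk}=\frac12|C|^2$; everything else is bookkeeping with the Cotton symmetries and the product rule. I do not expect any serious obstacle beyond keeping the index placements and signs consistent, since the heavy lifting (producing the third-order divergence of $V_{kij}/f$ and relating it to $C_{kji}R^{ik}$) has already been done in Lemma \ref{lemma3.3}.
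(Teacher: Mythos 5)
Your proposal is correct and follows essentially the same route as the paper: apply $\nabla^j$ to Lemma \ref{lemma3.3}, use the antisymmetry $C_{kji}=-C_{jki}$ on the $R^{ik}\nabla^jC_{kji}$ term, and use the definition \eqref{ct} together with the vanishing traces of $C$ (via the contracted Bianchi identity) to reduce $C_{kji}\nabla^jR^{ik}$ to $-\frac{1}{2}|C|^2$. The only blemish is the intermediate display, whose left-hand side should read $|C|^2=C^{ijk}C_{ijk}$ rather than $C_{ijk}\nabla^iR^{jk}$ (as literally written it asserts a quantity equals twice itself), but the final contraction identity $C_{ijk}\nabla^iR^{jk}=\frac{1}{2}|C|^2$ is correct and the argument goes through.
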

\begin{proof}
	Taking the divergence in Lemma \ref{lemma3.3}, we get
	\begin{equation*}
		C_{kji}\nabla^jR^{ik}+R^{ik}\nabla^jC_{kji}=\nabla^j\nabla^i\nabla^k\left(\frac{V_{kij}}{f}\right).
	\end{equation*}
	Now, from the symmetries of the $C$-tensor and renaming indices, we can infer that
	\begin{equation*}
		(\nabla^jR^{ik}-\nabla^kR^{ij})C_{jki}=C_{jki}\nabla^jR^{ik}+C_{kji}\nabla^kR^{ij}= 2C_{jki}\nabla^jR^{ik}.
	\end{equation*}
	Hence,
	\begin{equation*}
		\frac{1}{2}C_{kji}(\nabla^jR^{ik}-\nabla^kR^{ij})+R^{ik}\nabla^jC_{kji}=\nabla^j\nabla^i\nabla^k\left(\frac{V_{kij}}{f}\right).
	\end{equation*}
	Now, since the Cotton tensor is trace-free, from  \eqref{ct} we obtain
	\begin{equation*}
		-\frac{1}{2}C_{kji}C^{kji}-R^{ik}\nabla^jC_{jki}=\nabla^j\nabla^i\nabla^k\left(\frac{V_{kij}}{f}\right).
	\end{equation*}
	Therefore, the result holds.
\end{proof}

From this point, the structure of the electrostatic system plays an important role.
\begin{theorem}\label{teo3.3'}
	Let $(M^3,\,g,\,f,\,E)$ be an electrostatic system. For every $C^2$-function $\phi:\mathbb{R}\rightarrow\mathbb{R}$,  with $\phi(f)$ having compact support $K\subseteq M$ such that $K\cap\partial M=\emptyset$ we have
	\begin{eqnarray*}
		\frac{1}{4}\int_M\phi(f)|C|^2=\int_M\frac{\phi(f)}{f}\nabla^kf\nabla^i\nabla^jC_{jki}+\int_M\phi(f)E^{\flat_j}\nabla^kE^{\flat_i}C_{jki}.
	\end{eqnarray*}
\end{theorem}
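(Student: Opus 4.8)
The plan is to obtain the identity directly from the algebraic relation $fC_{ijk}=V_{ijk}$ of \eqref{ttt3} rather than from Lemma \ref{lemma4.3}; the advantage is that this route produces $\phi(f)$ and never $\phi'(f)$, matching the stated form exactly. Contracting \eqref{ttt3} with $C^{ijk}$ gives the pointwise relation $f|C|^2=V_{ijk}C^{ijk}$. On the support $K$ of $\phi(f)$ one has $f\neq0$, since $K\cap\partial M=\emptyset$ and $\partial M=f^{-1}(0)$, so $\phi(f)/f$ is a legitimate bounded integrand and
\[
\int_M\phi(f)|C|^2=\int_M\frac{\phi(f)}{f}\,V_{ijk}C^{ijk}.
\]
Everything then reduces to simplifying the scalar $V_{ijk}C^{ijk}$ using the explicit form \eqref{tt}.

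First I would discard the terms of \eqref{tt} carrying a metric factor $g_{jk}$ or $g_{ik}$, namely the $\frac{f}{4}(\nabla_iR\,g_{jk}-\cdots)$, the $R(\nabla_if\,g_{jk}-\cdots)$, and the $(R_{il}\nabla^lf\,g_{jk}-\cdots)$ pieces: each vanishes against $C^{ijk}$ because the Cotton tensor is totally trace-free, so $g^{jk}C_{ijk}=g^{ik}C_{ijk}=0$. The two surviving pieces are the electric term $2f(E^{\flat_i}\nabla_jE^{\flat_k}-E^{\flat_j}\nabla_iE^{\flat_k})$ and the Ricci term $-2(\nabla_ifR_{jk}-\nabla_jfR_{ik})$. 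Using only the antisymmetry $C_{ijk}=-C_{jik}$ (relabelling $i\leftrightarrow j$ in the second summand of each pair), both pieces double, and I expect to arrive at $V_{ijk}C^{ijk}=4f\,E^{\flat_i}\nabla_jE^{\flat_k}C^{ijk}-4\nabla_ifR_{jk}C^{ijk}$, where the first term is already the announced electric term up to relabelling of indices.

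It then remains to recognise the Ricci term as the announced second-order expression in $C$. Here I would invoke the two three-dimensional identities already in play in the proofs of Lemmas \ref{lemma3.3}--\ref{lemma4.3}, namely $\nabla^jC_{jki}=B_{ki}$ together with $\nabla^jB_{ij}=-C_{ijk}R^{jk}$ and the symmetry of $B$. Combining them yields the pointwise equality $\nabla^i\nabla^jC_{jki}=\nabla^iB_{ki}=-C_{kbc}R^{bc}$, whence $\nabla^kf\,\nabla^i\nabla^jC_{jki}=-\nabla^kf\,C_{kbc}R^{bc}=-\nabla_ifR_{jk}C^{ijk}$ after relabelling. Substituting this into the previous display and dividing by $4f$ produces the pointwise identity $\frac14|C|^2=E^{\flat_i}\nabla_jE^{\flat_k}C^{ijk}+\frac1f\nabla^kf\,\nabla^i\nabla^jC_{jki}$; multiplying by $\phi(f)$ and integrating over $M$ gives the claim, with the compact support of $\phi(f)$ away from $\partial M$ guaranteeing convergence and the admissibility of dividing by $f$.

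The hard part will be the index bookkeeping, and above all the final conversion of the Ricci term. One must stay disciplined about which symmetries of $C$ are available: $C$ is antisymmetric only in its first pair and is trace-free (and obeys the cyclic relation $C_{ijk}+C_{jki}+C_{kij}=0$), but it is \emph{not} symmetric in its last two slots. Consequently, matching $\nabla_ifR_{jk}C^{ijk}$ with $\nabla^kf\,\nabla^i\nabla^jC_{jki}$ through the Bach divergence identity requires care to keep exactly the correct slots of $C$ contracted at each step.
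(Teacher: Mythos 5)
Your proposal is correct, and it takes a genuinely different route from the paper's. The paper obtains the formula only as an \emph{integral} identity: it integrates the relation of Lemma \ref{lemma4.3} against $\phi(f)$ and then runs a chain of integrations by parts (this is where the compact support is used and where the $\dot{\phi}$ terms are generated and must eventually cancel), before invoking $fC_{ijk}=V_{ijk}$, \eqref{curlajuda} and $E^\flat\wedge dE^\flat=0$ to clean up the electric terms. You instead prove the \emph{pointwise} identity
\[
\frac14|C|^2=E^{\flat_i}\nabla_jE^{\flat_k}C^{ijk}+\frac1f\nabla^kf\,\nabla^i\nabla^jC_{jki}
\]
on $\{f\neq0\}$: contracting $fC_{ijk}=V_{ijk}$ with $C^{ijk}$ kills every term of \eqref{tt} carrying $g_{jk}$ or $g_{ik}$ because $C$ is totally trace-free, the two surviving pairs double under the antisymmetry $C_{ijk}=-C_{jik}$, and the Ricci term is converted via $\nabla^jC_{jki}=B_{ki}$ and $\nabla^iB_{ki}=-C_{kab}R^{ab}$ --- exactly the three-dimensional divergence identity the paper already invokes in Lemma \ref{lemma3.3}, so you assume nothing the paper does not. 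The index bookkeeping works out: in both $-\frac1f\nabla_ifR_{jk}C^{ijk}$ and $\frac1f\nabla^kf\nabla^i\nabla^jC_{jki}$ the gradient of $f$ sits in the first slot of $C$ and the Ricci tensor is contracted with the last two, and your pointwise relation is consistent with the paper's own intermediate computation of $R^{ij}\nabla^kfC_{kji}$ near the end of its proof (which is essentially your identity in disguise). What your route buys: it is much shorter, produces no $\dot{\phi}$ terms, requires no integration by parts, and makes clear that the compact-support hypothesis serves only to make the integrals finite --- the identity is local. It also explains a posteriori why all the $\dot{\phi}$ contributions in the paper's computation cancel. The only caution is the one you already flag: one must not use any symmetry of $C$ in its last two slots, and your argument indeed avoids doing so.
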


\begin{proof}
	From Lemma \ref{lemma4.3}, we obtain
	\begin{equation*}
		\begin{aligned}
			\frac{1}{2}|C|^2\phi(f)+\phi(f)R^{ik}\nabla^jC_{jki}=-\phi(f)\nabla^j\nabla^i\nabla^k\left(\frac{V_{kij}}{f}\right).
		\end{aligned}
	\end{equation*}
	
	Integrating this expression, we get
	\begin{equation*}
		\begin{aligned}
			\frac{1}{2}\int_M|C|^2\phi(f)+\int_M\phi(f)R^{ik}\nabla^jC_{jki}=\int_M\dot{\phi}(f)\nabla^jf\nabla^i\nabla^k\left(\frac{V_{kij}}{f}\right).
		\end{aligned}
	\end{equation*}
	Thus, from Lemma \ref{lemma3.3} we have 
	\begin{equation*}
		\begin{aligned}
			\frac{1}{2}\int_M|C|^2\phi(f)+\int_M\phi(f)R^{ik}\nabla^jC_{jki}=&-\int_M\dot{\phi}(f)R^{ik}\nabla^jfC_{jki}.
		\end{aligned}
	\end{equation*}
	We will perform integration in some parts of the above equation, separately, using \eqref{combinado} and the fact that $C_{ijk}$ is trace-free and skew-symmetric. First,
	\begin{eqnarray*}
		\int_M\phi(f)R^{ik}\nabla^jC_{jki}&=&\int_M\dfrac{\phi(f)}{f}\nabla^i\nabla^kf\nabla^jC_{jki}-2\int_M\phi(f)E^{\flat_i}E^{\flat_k}\nabla^jC_{jki}\\
		&=&\int_M\dfrac{\phi(f)}{f}\nabla^i\nabla^kf\nabla^jC_{jki}+2\int_M\dot{\phi}(f)\nabla^jfE^{\flat_i}E^{\flat_k}C_{jki}\\
		&&+2\int_M\phi(f)\nabla^j(E^{\flat_i}E^{\flat_k})C_{jki}.
	\end{eqnarray*}
	On the other hand,
	\begin{eqnarray*}
		\int_M\dot{\phi}(f)R^{ik}\nabla^jfC_{jki}&=&\int_M\dfrac{\dot{\phi}(f)}{f}\nabla^jf\nabla^i\nabla^kfC_{jki}-2\int_M\dot{\phi}(f)\nabla^jfE^{\flat_i}E^{\flat_k}C_{jki}.\\
	\end{eqnarray*}
	Note that, since the Hessian tensor is symmetric
	\begin{equation*}
		2\nabla^j\nabla^kf C_{jki}=\nabla^k\nabla^jf C_{jki}+\nabla^j\nabla^kf C_{kji}=\nabla^k\nabla^jf(C_{jki}+C_{kji})=0.
	\end{equation*}
	Hence,
	\begin{align*}
		\frac{1}{2}\int_M|C|^2\phi(f)+&\int_M\dfrac{\phi(f)}{f}\nabla^i\nabla^kf\nabla^jC_{jki}+2\int_M\phi(f)\nabla^j(E^{\flat_i}E^{\flat_k})C_{jki}\\
		=&-\int_M\dfrac{\dot{\phi}(f)}{f}\nabla^jf\nabla^i\nabla^kfC_{jki}=\int_M\dfrac{\dot{\phi}(f)}{f}\nabla^jf\nabla^if\nabla^kC_{jki}\\
		=&-\int_M\dfrac{\dot{\phi}(f)}{f}\nabla^if\nabla^kf\nabla^jC_{jki}
		=-\int_M\dfrac{\nabla^i{\phi}(f)}{f}\nabla^kf\nabla^jC_{jki}\\
		=&-\int_M\frac{\phi(f)}{f^2}\nabla^if\nabla^kf\nabla^jC_{jki}+\int_M\dfrac{{\phi}(f)}{f}\nabla^i\nabla^kf\nabla^jC_{jki}\\
		&+\int_M\dfrac{{\phi}(f)}{f}\nabla^kf\nabla^i\nabla^jC_{jki}.
	\end{align*}
	Therefore, we get
	\begin{eqnarray}\label{integral}
		\int_M\frac{\phi(f)}{f}\nabla^kf\nabla^i\nabla^jC_{jki}&=&\frac{1}{2}\int_M|C|^2\phi(f)+\int_M\frac{\phi(f)}{f^2}\nabla^if\nabla^kf\nabla^jC_{jki}\nonumber\\
		&&+2\int_M\phi(f)\nabla^j(E^{\flat_i}E^{\flat_k})C_{jki}.
	\end{eqnarray}

	Then, since the Cotton tensor is trace-free and skew-symmetric, another integration by parts gives us
	
	\begin{eqnarray*}
		\int_M\frac{\phi(f)}{f^2}\nabla^if\nabla^kf\nabla^jC_{jki}&=&\int_M\dfrac{\phi(f)}{f^2}\nabla^j\nabla^if\nabla^kfC_{kji}\\
		&=&\int_M\dfrac{\phi(f)}{f}(R^{ij}+2E^{\flat_i}E^{\flat_j})\nabla^kfC_{kji}.
	\end{eqnarray*}
	We used \eqref{combinado} in the last equality. Thus, \eqref{integral} can be rewrite in the following form:
	\begin{eqnarray*}
		\int_M\frac{\phi(f)}{f}\nabla^kf\nabla^i\nabla^jC_{jki}&=&\frac{1}{2}\int_M|C|^2\phi(f)+2\int_M\phi(f)\nabla^j(E^{\flat_i}E^{\flat_k})C_{jki}\nonumber\\
		&&-2\int_M\dfrac{\phi(f)}{f}E^{\flat_i}E^{\flat_j}\nabla^kfC_{jki}+\int_M\dfrac{\phi(f)}{f}R^{ij}\nabla^kfC_{kji}.
	\end{eqnarray*}
	
	Now, from \eqref{tt} and \eqref{ttt3}, we have
	\begin{eqnarray*}\label{ricci}
		R^{ij}\nabla^kfC_{kji}&=&\frac{1}{2}C_{kji}(\nabla^kfR^{ji}-\nabla^jf R^{ki})\nonumber\\
		&=&-\frac{1}{2}fC_{kji}\left[\dfrac{1}{2}C^{kji}+(E^{\flat_j}\nabla^kE^{\flat_i}-E^{\flat_k}\nabla^jE^{\flat_i})\right]\\
		&=&-\frac{1}{4}f|C|^2-\dfrac{1}{2}f\left(E^{\flat_j}\nabla^kE^{\flat_i}-E^{\flat_k}\nabla^jE^{\flat_i}\right)C_{kji}.\nonumber
	\end{eqnarray*}
	
	Thus,
	\begin{eqnarray*}
		&&\frac{1}{4}\int_M\phi(f)|C|^2=\int_M\frac{\phi(f)}{f}\nabla^kf\nabla^i\nabla^jC_{jki}\nonumber\\
		&&+2\int_M\dfrac{\phi(f)}{f}\left[E^{\flat_j}E^{\flat_i}\nabla^kf-f\nabla^j(E^{\flat_i}E^{\flat_k})+\dfrac{f}{4}\left(E^{\flat_k}\nabla^jE^{\flat_i}-E^{\flat_j}\nabla^kE^{\flat_i}\right)\right]C_{jki}.
	\end{eqnarray*}
	Furthermore, from \eqref{curlajuda} we have
	\begin{eqnarray*}
		E^{\flat_i}\nabla^kf - E^{\flat_k}\nabla^if =f(\nabla^iE^{\flat_k}-\nabla^kE^{\flat_i}).
	\end{eqnarray*}
	Combining the last two equations and the fact that Cotton tensor is skew-symmetric, yields to
	\begin{eqnarray*}
		&&\frac{1}{4}\int_M\phi(f)|C|^2=\int_M\frac{\phi(f)}{f}\nabla^kf\nabla^i\nabla^jC_{jki}\nonumber\\
		&&+2\int_M\phi(f)\left[E^{\flat_j}(\nabla^iE^{\flat_k}-\nabla^kE^{\flat_i})-\nabla^j(E^{\flat_i}E^{\flat_k})+\dfrac{1}{4}\left(E^{\flat_k}\nabla^jE^{\flat_i}-E^{\flat_j}\nabla^kE^{\flat_i}\right)\right]C_{jki}\\&&=\int_M\frac{\phi(f)}{f}\nabla^kf\nabla^i\nabla^jC_{jki}\nonumber\\
		&&+2\int_M\phi(f)\left[E^{\flat_j}\nabla^iE^{\flat_k}-E^{\flat_i}\nabla^jE^{\flat_k}-\frac{3}{4}E^{\flat_k}\nabla^jE^{\flat_i}-\frac{5}{4}E^{\flat_j}\nabla^kE^{\flat_i}\right]C_{jki}.
	\end{eqnarray*}
	Note that
	\begin{eqnarray*}
		&&E^{\flat_j}\nabla^iE^{\flat_k}C_{jki}  = - E^{\flat_k}\nabla^iE^{\flat_j}C_{jki},\qquad E^{\flat_i}\nabla^jE^{\flat_k}C_{jki} = - E^{\flat_i}\nabla^kE^{\flat_j}C_{jki},\\ &&E^{\flat_k}\nabla^jE^{\flat_i}C_{jki} = - E^{\flat_j}\nabla^kE^{\flat_i}C_{jki} \quad\mbox{and}\quad E^{\flat_j}\nabla^kE^{\flat_i}C_{jki} = - E^{\flat_k}\nabla^jE^{\flat_i}C_{jki}.
	\end{eqnarray*}
	Then,
	\begin{eqnarray*}
		\frac{1}{4}\int_M\phi(f)|C|^2&=&\int_M\phi(f)\left[2E^{\flat_j}\nabla^iE^{\flat_k}-2E^{\flat_i}\nabla^jE^{\flat_k}+E^{\flat_k}\nabla^jE^{\flat_i}\right]C_{jki}\nonumber\\
		&& + \int_M\frac{\phi(f)}{f}\nabla^kf\nabla^i\nabla^jC_{jki}.
	\end{eqnarray*}

	Since from \eqref{2forma} we have $$2E^{\flat_i}\nabla^jE^{\flat_k}C_{jki}=E^{\flat_i}E^{\flat_{jk}}C_{jki},$$ we can infer that
	\begin{eqnarray*}
		\frac{1}{4}\int_M\phi(f)|C|^2&=&\int_M\phi(f)\left[-2E^{\flat_k}\nabla^iE^{\flat_j}-E^{\flat_i}E^{\flat_{jk}}+E^{\flat_k}\nabla^jE^{\flat_i}\right]C_{jki}\\
		&&+\int_M\frac{\phi(f)}{f}\nabla^kf\nabla^i\nabla^jC_{jki}\\
		&=&\int_M\phi(f)\left[-E^{\flat_k}\nabla^iE^{\flat_j}+E^{\flat_i}E^{\flat_{kj}}+E^{\flat_k}E^{\flat_{ji}}\right]C_{jki}\\
		&&+\int_M\frac{\phi(f)}{f}\nabla^kf\nabla^i\nabla^jC_{jki}\\
		&=&\int_M\phi(f)\left[E^{\flat_k}E^{\flat_{ji}}+E^{\flat_j}\nabla^iE^{\flat_k}+E^{\flat_i}E^{\flat_{kj}}\right]C_{jki}\\
		&&+\int_M\frac{\phi(f)}{f}\nabla^kf\nabla^i\nabla^jC_{jki}\\
		&=&\int_M\phi(f)\left[E^{\flat_k}E^{\flat_{ji}}+E^{\flat_j}E^{\flat_{ik}}+E^{\flat_i}E^{\flat_{kj}}+E^{\flat_j}\nabla^kE^{\flat_i}\right]C_{jki}\\
		&&+\int_M\frac{\phi(f)}{f}\nabla^kf\nabla^i\nabla^jC_{jki}.
	\end{eqnarray*}
	
	On the other hand, since $E^\flat\wedge dE^\flat=0$, from \eqref{curlajuda} we get
	\begin{eqnarray*}
		f(E^{\flat_k}E^{\flat_{ji}}+E^{\flat_j}E^{\flat_{ik}}+E^{\flat_i}E^{\flat_{kj}})&=&E^{\flat_k}E^{\flat_j}\nabla^if-E^{\flat_k}E^{\flat_i}\nabla^jf+E^{\flat_j}E^{\flat_i}\nabla^kf\\&&-E^{\flat_j}E^{\flat_k}\nabla^if+E^{\flat_i}E^{\flat_k}\nabla^jf-E^{\flat_i}E^{\flat_j}\nabla^kf\\&=&0.
	\end{eqnarray*}
	
	Finally,
	\begin{eqnarray*}
		\frac{1}{4}\int_M\phi(f)|C|^2=\int_M\frac{\phi(f)}{f}\nabla^kf\nabla^i\nabla^jC_{jki}+\int_M\phi(f)E^{\flat_j}\nabla^kE^{\flat_i}C_{jki}.
	\end{eqnarray*}
\end{proof}

\section{Proof of the main results}\label{proofs}

It is well-known {(see \cite[Lemma 4]{tiarlos})} that in $\partial M=f^{-1}(0)$ the electric field and the gradient of the lapse function are linearly dependent (LD). Motivated by the {charged Nariai solution and the cold black hole system}, we assume that both fields are linearly dependent on $M$, that is, there exists a smooth function $\rho$ such that $E=\rho\nabla f$. Thus we can rewrite the $V$-tensor \eqref{tt} as follow.

\begin{lemma}\label{ld1}
	Let $(M^3,\,g,\,f,\,E)$ be an electrostatic system in which $E=\rho\nabla f$. Then the $V$-tenor is given by
	\begin{eqnarray*}
		V_{ijk}&=&f\rho|\nabla f|^2(\nabla_i\rho g_{jk}-\nabla_j\rho g_{ik})+\left(R-\dfrac{1}{2}Rf^2\rho^2-f^2\rho^2\Lambda\right)(\nabla_ifg_{jk}-\nabla_jfg_{ik})\\
		&&+2(f^2\rho^2-1)(\nabla_ifR_{jk}-\nabla_jfR_{ik})+(f^2\rho^2-1)(R_{il}\nabla^lfg_{jk}-R_{jl}\nabla^lfg_{ik}).
	\end{eqnarray*}
\end{lemma}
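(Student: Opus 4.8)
The plan is to substitute the hypothesis $E^\flat=\rho\,df$ directly into the definition \eqref{tt} of the $V$-tensor and then eliminate the resulting derivative expressions using the three structural identities already available: the curl relation \eqref{curlajuda}, the Hessian equation \eqref{combinado}, and the scalar-curvature relation \eqref{rrr}. Throughout I write $f_i=\nabla_if$, $\rho_i=\nabla_i\rho$ and $f_{ij}=\nabla_i\nabla_jf$.

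First I would record the elementary consequence of $E^\flat=\rho\,df$: differentiating gives $\nabla_jE^{\flat_k}=\rho_jf_k+\rho f_{jk}$, so the leading term of \eqref{tt} becomes $2f(E^{\flat_i}\nabla_jE^{\flat_k}-E^{\flat_j}\nabla_iE^{\flat_k})=2f\rho f_k(f_i\rho_j-f_j\rho_i)+2f\rho^2(f_if_{jk}-f_jf_{ik})$. Since $(fE)^\flat=f\rho\,df$ and $\textnormal{curl}(fE)=0$ forces this one-form to be closed, one gets $f\,d\rho\wedge df=0$, i.e. $f_i\rho_j-f_j\rho_i=0$; this kills the first summand and leaves $2f\rho^2(f_if_{jk}-f_jf_{ik})$. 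Note that the proportionality $\nabla\rho\parallel\nabla f$ does \emph{not} annihilate the structure $\rho_ig_{jk}-\rho_jg_{ik}$ appearing in the statement, so that piece must enter from elsewhere.

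Next I would eliminate every second derivative of $f$ through \eqref{combinado}, namely $f_{jk}=f(R_{jk}+2\rho^2f_jf_k-\frac{R}{2}g_{jk})$. Inserting this into $f_if_{jk}-f_jf_{ik}$, the cubic term $f_if_jf_k$ cancels by symmetry in $i,j$ and one is left with a combination of $f_iR_{jk}-f_jR_{ik}$ and $f_ig_{jk}-f_jg_{ik}$. The delicate step is the term $\frac{f}{4}(\nabla_iR\,g_{jk}-\nabla_jR\,g_{ik})$: here I would differentiate \eqref{rrr}, which now reads $R=2\rho^2|\nabla f|^2+2\Lambda$, obtaining $\nabla_iR=4\rho|\nabla f|^2\rho_i+4\rho^2f^lf_{il}$, and feed \eqref{combinado} back in to rewrite $f^lf_{il}$. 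This single term is the only surviving source of the $\rho$-gradient piece $f\rho|\nabla f|^2(\rho_ig_{jk}-\rho_jg_{ik})$ in the statement, and it simultaneously contributes a $R_{il}f^lg_{jk}$ piece and an $f_ig_{jk}$ piece.

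Finally I would collect the contributions of all five summands of \eqref{tt} according to the four tensor structures $\rho_ig_{jk}-\rho_jg_{ik}$, $f_iR_{jk}-f_jR_{ik}$, $R_{il}\nabla^lf\,g_{jk}-R_{jl}\nabla^lf\,g_{ik}$, and $f_ig_{jk}-f_jg_{ik}$, and read off their coefficients; the scalar coefficient of the last structure is simplified once more by the substitution $\rho^2|\nabla f|^2=\frac{R}{2}-\Lambda$ coming from \eqref{rrr}. I expect the only genuine obstacle to be the bookkeeping in this last collection step: several $\rho^2f^2$-weighted copies of the metric-gradient structure are produced by different summands — by the leading term after the Hessian substitution, by the Hessian-expanded $\nabla R$ term, and by the bare $R(f_ig_{jk}-f_jg_{ik})$ term — and combining them into a single scalar coefficient is precisely where factor-of-two or sign slips are easiest to make.
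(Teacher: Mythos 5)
Your proposal is correct and follows essentially the same route as the paper: substitute $E^\flat=\rho\,df$ into \eqref{tt}, eliminate the Hessian via \eqref{combinado}, rewrite $\nabla_iR$ by differentiating \eqref{rrr} and feeding \eqref{combinado} back in, kill the antisymmetric $\rho$-gradient term using $\operatorname{curl}(fE)=0$ (equivalently $\nabla_i\rho\nabla_jf=\nabla_j\rho\nabla_if$), and finally simplify the coefficient of $\nabla_ifg_{jk}-\nabla_jfg_{ik}$ with $\rho^2|\nabla f|^2=\tfrac{R}{2}-\Lambda$. The only differences are cosmetic (you apply the curl identity before rather than after assembling the intermediate expression), and your identification of the $\nabla R$ term as the sole source of the $f\rho|\nabla f|^2(\nabla_i\rho\,g_{jk}-\nabla_j\rho\,g_{ik})$ piece matches the paper's computation.
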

\begin{proof}
	Since the electric field and the lapse function are linearly dependent (LD), there exists a smooth function $\rho$ such that $E=\rho\nabla f$. Using \eqref{combinado} we get
	\begin{eqnarray*}
		\nabla_iE^{\flat_j}&=&\nabla_i(\rho \nabla_jf)\\
		&=&\nabla_i\rho\nabla_jf+\rho \nabla_i\nabla_jf\\
		&=&\nabla_i\rho\nabla_jf+2f\rho^3\nabla_jf\nabla_if+f\rho R_{ij}-\dfrac{f}{2}\rho Rg_{ij}.
	\end{eqnarray*}
	
	On other hand, from \eqref{rrr} we get
	\begin{eqnarray*}
		\nabla_i R&=&2\nabla_i|E|^2\\
		&=&4\rho|\nabla f|^2\nabla_i\rho+2\rho^2\nabla_i|\nabla f|^2.
	\end{eqnarray*}
	From \eqref{combinado} we know that
	$$\nabla_i|\nabla f|^2=2f\left(R_{il}\nabla^l f+2\rho^2|\nabla f|^2\nabla_i f-\frac{R}{2}\nabla_i f\right).$$
	Combining the last two equations and using \eqref{rrr}, we have
	\begin{eqnarray*}
		\nabla_i R&=&4\rho|\nabla f|^2\nabla_i\rho+4f\rho^2\left(R_{il}\nabla^l f+2\rho^2|\nabla f|^2\nabla_i f-\frac{R}{2}\nabla_i f\right).
	\end{eqnarray*}

	Then, from \eqref{tt} it follows that
	\begin{eqnarray*}
		V_{ijk}&=&2f\rho(\nabla_if\nabla_j\rho\nabla_kf-\nabla_jf\nabla_i\rho\nabla_kf)+f\rho|\nabla f|^2(\nabla_i\rho g_{jk}-\nabla_j\rho g_{ik})\\
		&&+2(f^2\rho^2-1)(\nabla_ifR_{jk}-\nabla_jfR_{ik})+(f^2\rho^2-1)(R_{il}\nabla^lfg_{jk}-R_{jl}\nabla^lfg_{ik})\\
		&&+\left[\left(1-\dfrac{3}{2}f^2\rho^2\right)R+2f^2\rho^4|\nabla f|^2\right](\nabla_ifg_{jk}-\nabla_jfg_{ik}).
	\end{eqnarray*}
	Since $\textnormal{curl}(fE)=0$ implies that $E^{\flat_{ij}}=0$ (the fields are LD, i.e., $df\wedge E^\flat=0$), we have $\nabla_iE^{\flat_j}=\nabla_jE^{\flat_i}.$ So, $$\nabla_i\rho\nabla_jf=\nabla_j\rho\nabla_if.$$
	
	Finally, combining \eqref{rrr} with the last two identities the result follows.
\end{proof}

Moreover, from Theorem \ref{teo3.3'} we obtain the following corollary.
\begin{corollary}\label{corolario2}
	Let $(M^3,\,g,\,f,\,E)$ be an electrostatic system where the electric field and gradient of the lapse function are linearly dependent. For every $\phi:\mathbb{R}\rightarrow\mathbb{R}$, $C^2$
	function with $\phi(f)$ having compact support $K\subseteq M$ we have
	
	\begin{eqnarray*}
		\frac{1}{2}\int_M\dfrac{1}{Q}|C|^2\phi(f)=\int_M\frac{\phi(f)}{f}\nabla^kf\nabla^i\nabla^jC_{jki},
	\end{eqnarray*}
	where $Q=2(1-f^2\rho^2)\neq0$.
\end{corollary}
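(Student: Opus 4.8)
The plan is to start from the integral identity of Theorem \ref{teo3.3'} and to dispose of the extra term $\int_M\phi(f)E^{\flat_j}\nabla^k E^{\flat_i}C_{jki}$ by exploiting the hypothesis $E=\rho\nabla f$. Writing $E^{\flat_i}=\rho\nabla_i f$, expanding the derivative, and using \eqref{combinado} to replace the Hessian, I would obtain
$$E^{\flat_j}\nabla^k E^{\flat_i}=\rho\nabla_j f\,\nabla^k\rho\,\nabla^i f+\rho^2 f\nabla_j f\Big(R^{ki}+2\rho^2\nabla^k f\nabla^i f-\tfrac{R}{2}g^{ki}\Big).$$
Contracting this against $C_{jki}$ produces four pieces, and the first step is to show that only the Ricci piece survives.

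For the vanishing I would invoke the symmetries of the Cotton tensor recorded in the proof of Lemma \ref{lemma3.3}: $C$ is skew in its first two slots and every metric trace of $C$ is zero. The term carrying $g^{ki}$ dies against $g^{ki}C_{jki}=0$; the term $2\rho^4 f\,\nabla_j f\nabla^k f\nabla^i f\,C_{jki}$ dies because $\nabla_j f\nabla^k f\nabla^i f$ is totally symmetric while $C_{jki}$ is skew in $(j,k)$. For the remaining $\nabla\rho$-term, the key observation is that linear dependence forces $\nabla\rho$ to be collinear with $\nabla f$: the identity $\nabla_i\rho\,\nabla_j f=\nabla_j\rho\,\nabla_i f$ established in the proof of Lemma \ref{ld1} says $\nabla\rho\wedge\nabla f=0$, so on the regular set $\nabla\rho=\lambda\nabla f$ for a function $\lambda$, and then $\rho\nabla_j f\nabla^k\rho\nabla^i f\,C_{jki}=\rho\lambda\,\nabla_j f\nabla^k f\nabla^i f\,C_{jki}=0$ by the same symmetry argument. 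Hence $E^{\flat_j}\nabla^k E^{\flat_i}C_{jki}=\rho^2 f\,R^{ki}\nabla_j f\,C_{jki}$.

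The heart of the argument is to rewrite $R^{ki}\nabla_j f\,C_{jki}$ as a multiple of $|C|^2$. For this I would contract the identity $fC_{ijk}=V_{ijk}$ of \eqref{ttt3} with $C^{ijk}$, using the explicit linearly dependent form of $V$ from Lemma \ref{ld1}. Every term of $V$ built from a metric factor $g_{jk}$ or $g_{ik}$ is annihilated by the trace-freeness of $C$, and the only surviving contribution is the Ricci–gradient term, giving
$$f|C|^2=4(f^2\rho^2-1)\,\nabla_i f\,R_{jk}\,C^{ijk},$$
so that (since $R$ is symmetric, the index pattern matches) $R^{ki}\nabla_j f\,C_{jki}=-\tfrac{f}{2Q}|C|^2$ with $Q=2(1-f^2\rho^2)\neq0$. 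Substituting back yields $E^{\flat_j}\nabla^k E^{\flat_i}C_{jki}=-\tfrac{\rho^2 f^2}{2Q}|C|^2$, and inserting this into Theorem \ref{teo3.3'} and collecting the $|C|^2$ terms produces the coefficient $\tfrac14+\tfrac{f^2\rho^2}{4(1-f^2\rho^2)}=\tfrac{1}{2Q}$, which is exactly the claimed identity.

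I expect the main obstacle to be the index bookkeeping: correctly determining which contractions of the skew, trace-free tensor $C$ vanish, and matching the index pattern of the surviving Ricci–gradient contraction in the extra term with the one coming from the $V$-tensor identity. The conceptually crucial (and easy to miss) input is that linear dependence upgrades ``$E\parallel\nabla f$'' to ``$\nabla\rho\parallel\nabla f$'', which is what makes the $\nabla\rho$-term totally symmetric and therefore harmless.
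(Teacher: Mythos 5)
Your argument is correct and follows essentially the same route as the paper's own proof: substitute $E=\rho\nabla f$ into Theorem \ref{teo3.3'}, kill the $\nabla\rho$-, $g^{ki}$- and $\nabla^kf\nabla^if$-pieces using $\textnormal{curl}(fE)=0$ together with the skew/trace-free symmetries of $C$, and convert the surviving contraction $R^{ki}\nabla_jf\,C_{jki}$ into a multiple of $|C|^2$ via $fC_{ijk}=V_{ijk}$ and Lemma \ref{ld1}. The only difference is cosmetic: you track the sign of that last contraction correctly as $-\frac{f}{2Q}|C|^2$, whereas the paper's displayed intermediate step records it as $+\frac{f}{2Q}|C|^2$ (an evident slip, since only the minus sign makes the coefficients combine to $\frac{1}{4}+\frac{f^2\rho^2}{2Q}=\frac{1}{2Q}$).
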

\begin{proof}
	Taking in accounting that $E=\rho\nabla f$ in Theorem \ref{teo3.3'}, since the Cotton tensor is skew-symmetric and trace-free we obtain 
	\begin{eqnarray*}
		\frac{1}{4}\int_M\phi(f)|C|^2&=&\int_M\frac{\phi(f)}{f}\nabla^kf\nabla^i\nabla^jC_{jki}+\int_M\phi(f)f\rho^2\nabla^jfR^{ki}C_{jki},
	\end{eqnarray*}
	where we used that $\textnormal{curl}(fE)=0$, i.e., $$\nabla^k\rho\nabla^if=\nabla^i\rho\nabla^kf.$$

	On other hand, from Lemma \ref{ld1} we have
	
	\begin{eqnarray*}
		\nabla^jfR^{ki}C_{jki}&=&\dfrac{1}{2}C_{jki}(\nabla^jfR^{ki}-\nabla^kfR^{ji})\\
		&=&\dfrac{1}{2Q}C_{jki}V^{jki}\\
		&=&\dfrac{1}{2Q}f|C|^2.
	\end{eqnarray*}
\end{proof}

Now, we are able to demonstrate our main results.

\begin{theorem}[Theorem \ref{teoharmonico'}]
		Let $(M^3,\,g,\,f,\,E)$ be a compact electrostatic system such that the electric field and the gradient of the lapse function are linearly dependent. Suppose that the Bach tensor is divergence-free and $Q$ has defined sign everywhere. Then $(M^3,\,g)$ is locally conformally flat. 
\end{theorem}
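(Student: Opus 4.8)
The plan is to obtain the conclusion directly from Corollary \ref{corolario2}, since in dimension three the manifold is locally conformally flat precisely when the Cotton tensor vanishes, $C\equiv 0$. All of the analytic work has already been done in Theorem \ref{teo3.3'} and Corollary \ref{corolario2}; what remains is to combine the divergence-free Bach hypothesis with the definite sign of $Q$ and to justify an appropriate choice of the test function $\phi$. So the target is to force the right-hand side of Corollary \ref{corolario2} to vanish and then read off $C\equiv 0$ from the sign condition.

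First I would identify the integrand on the right-hand side of Corollary \ref{corolario2} as the divergence of the Bach tensor. Writing $B_{ki}=\nabla^jC_{jki}$ for the three-dimensional Bach tensor, and using that $B$ is symmetric, one has
$$\nabla^i\nabla^jC_{jki}=\nabla^iB_{ki}=\nabla^iB_{ik}=(\textnormal{div}B)_k.$$
Hence the hypothesis that $B$ is divergence-free, $\nabla^iB_{ik}=0$, makes the entire right-hand side vanish for every admissible $\phi$, so Corollary \ref{corolario2} collapses to
$$\frac12\int_M\frac{1}{Q}\,|C|^2\,\phi(f)=0.$$

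Next I would select the test function. Because $M$ is compact without boundary, we have $\partial M=\emptyset$, hence $f^{-1}(0)=\emptyset$ and $f$ is bounded below by a positive constant; similarly $Q$, being continuous and of one fixed sign, is bounded away from $0$. Thus $1/f$ and $1/Q$ are bounded and all the integrals are finite. Taking simply $\phi\equiv 1$ — whose support is all of $M$, trivially disjoint from the empty boundary — yields
$$\int_M\frac{1}{Q}\,|C|^2=0.$$
Since $Q$ has a fixed sign on $M$, the continuous integrand $\frac{1}{Q}|C|^2$ does not change sign, and a sign-definite continuous function with vanishing integral must vanish identically; therefore $|C|^2\equiv 0$, i.e. $C\equiv 0$, and the manifold is locally conformally flat.

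The step that demands care is not the final sign argument but two bookkeeping points: first, the index identification $\nabla^i\nabla^jC_{jki}=(\textnormal{div}B)_k$, so that ``divergence-free Bach tensor'' really does annihilate the remaining term; second, the admissibility of $\phi\equiv 1$ in Corollary \ref{corolario2}. The latter is exactly where compactness without boundary is essential, since the integration by parts underlying Theorem \ref{teo3.3'} produces no boundary contributions only when the support of $\phi(f)$ is disjoint from $\partial M$, which here holds because $\partial M=\emptyset$. The linear-dependence assumption $E=\rho\nabla f$ enters only implicitly, through Corollary \ref{corolario2}, where it is what allows the electric-field terms of Theorem \ref{teo3.3'} to be absorbed into the single factor $1/Q$.
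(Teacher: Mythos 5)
Your argument is internally correct, but it is not the paper's argument, and the difference matters because of what ``divergence-free Bach tensor'' means here. The paper's introduction and its proof both take the hypothesis to be the \emph{scalar} condition $\textnormal{div}^2B=0$, i.e.\ $\nabla^i\nabla^k\nabla^jC_{jki}=0$ (equivalently $\textnormal{div}^3C=0$), which is strictly weaker than the pointwise one-form condition $\nabla^iB_{ik}=0$ that you use. You kill the right-hand side of Corollary \ref{corolario2} \emph{pointwise} via the identification $\nabla^i\nabla^jC_{jki}=(\textnormal{div}B)_k$ (your index bookkeeping and the symmetry of $B=\textnormal{div}C$ in dimension three are fine), after which any admissible $\phi$ works and $\phi\equiv1$ is legitimate on a closed manifold. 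The paper instead chooses $\phi(f)=f^4$ precisely so that $\frac{\phi(f)}{f}\nabla^kf=\frac14\nabla^i f^4$ is an exact gradient; two integrations by parts (with no boundary terms, since $M$ is closed) then move all three derivatives onto $C$ and produce $-\frac14\int_Mf^4\,\textnormal{div}^3C$, which vanishes under the weaker hypothesis. The endgame --- $\int_M Q^{-1}|C|^2\phi(f)=0$ with $f>0$ and $Q$ of fixed sign forcing $C\equiv0$ --- is the same in both proofs. So: if the theorem is read with the standard meaning $\textnormal{div}B=0$, your proof is a valid and more elementary variant; if it is read with the paper's stated meaning $\textnormal{div}^2B=0$, your first step fails (the integrand no longer vanishes pointwise) and the $\phi=f^4$ integration-by-parts device is exactly the missing idea. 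You should at least flag which form of the hypothesis you are consuming.
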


\begin{proof}
	Considering $M$ is compact {without boundary} and $\phi(f)=f^4$, from Corollary \ref{corolario2} we obtain
	\begin{eqnarray*}
		\frac{1}{2}\int_M\dfrac{1}{Q}|C|^2f^4&=&\int_Mf^3\nabla^kf\nabla^i\nabla^jC_{jki}\\
		&=&\frac{1}{4}\int_M\nabla^if^4\nabla^k\nabla^jC_{jki}\\
		&=&-\frac{1}{4}\int_Mf^4\nabla^i\nabla^k\nabla^jC_{jki}.
	\end{eqnarray*}
	Since  $\textnormal{div}^2B = 0$, by definition,  $\textnormal{div}^3C = 0$, then the right-hand side is identically zero, i.e., 
	\begin{eqnarray*}
		\int_M\dfrac{1}{Q}|C|^2f^4=0,
	\end{eqnarray*}
	
	Since $f>0$ over $M$ and $Q$ has defined sign everywhere, $f^2\rho^2\neq1$, that is, the integral has always the same sign, therefore  $C$ must be identically zero, thus the result holds.
	
\end{proof}

\begin{theorem}[Theorem \ref{proper'}]
	Let $(M^3,\,g,\,f,\,E)$ be an electrostatic system such that the electric field and the gradient of the lapse function are linearly dependent. Suppose that the Bach tensor is divergence-free and $Q$ has defined sign everywhere. If $f$ is a proper function, then  $(M^3,\,g)$ is locally conformally flat.
\end{theorem}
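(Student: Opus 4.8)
The plan is to run the same integral identity as in the compact case (Theorem \ref{teoharmonico'}), but to replace the global weight $f^4$ by a compactly supported cutoff of it, and then to show that the only new term produced by the cutoff vanishes because of the hypothesis $\textnormal{div}^2B=0$ together with the properness of $f$. Concretely, fix a smooth nonincreasing function $\eta:\mathbb{R}\to[0,1]$ with $\eta\equiv1$ on $(-\infty,T]$ and $\eta\equiv0$ on $[2T,\infty)$, and apply Corollary \ref{corolario2} with $\phi(f)=f^4\eta(f)$. Since $f$ is proper and $f>0$, the set $\{f\le 2T\}$ is compact, so $\phi(f)$ indeed has compact support and the corollary applies. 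Writing $(\textnormal{div}B)_k=\nabla^i\nabla^jC_{jki}$ and repeating the integration by parts of the compact proof, the main term integrates against $\nabla^k(f^4\eta)$ and produces $-\tfrac14\int_M f^4\eta\,\textnormal{div}^2B=0$, leaving
\[
\frac12\int_M\frac{1}{Q}|C|^2f^4\eta(f)=-\frac14\int_M f^4\eta'(f)\,\nabla^kf\,(\textnormal{div}B)_k.
\]

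The heart of the matter is to show that the right-hand side vanishes. First I would rewrite it by the coarea formula. For a regular value $t$ of $f$ let $\Phi(t)=\int_{\{f=t\}}(\textnormal{div}B)(\nu)\,d\sigma$ be the flux of the vector field dual to $\textnormal{div}B$ across the level set, where $\nu=\nabla f/|\nabla f|$. Then, since $f\equiv t$ on $\{f=t\}$,
\[
\int_M f^4\eta'(f)\,\nabla^kf\,(\textnormal{div}B)_k=\int_{\mathbb{R}}t^4\eta'(t)\,\Phi(t)\,dt.
\]
Now I would invoke properness once more: for a regular value $t$ the sublevel set $\Omega_t=\{f\le t\}$ is compact with smooth boundary $\{f=t\}$, so the divergence theorem and the hypothesis $\textnormal{div}(\textnormal{div}B)=\textnormal{div}^2B=0$ give
\[
\Phi(t)=\int_{\partial\Omega_t}(\textnormal{div}B)(\nu)\,d\sigma=\int_{\Omega_t}\textnormal{div}^2B\,dV=0.
\]
By Sard's theorem almost every $t$ is regular, hence $\Phi\equiv0$ almost everywhere and the cutoff term is zero.

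Combining the two displays yields $\int_M\frac{1}{Q}|C|^2f^4\eta(f)=0$ for every such cutoff. Letting $T\to\infty$, the cutoffs increase to $1$ and $\{f\le T\}$ exhausts $M$; since $f>0$ and $Q$ has a fixed sign, the integrand $\frac1Q|C|^2f^4$ has a fixed sign, so monotone convergence gives $\int_M\frac1Q|C|^2f^4=0$ and therefore $C\equiv0$. Hence the Cotton tensor vanishes and $(M^3,g)$ is locally conformally flat.

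I expect the main obstacle to be precisely the control of the cutoff term $\int_M f^4\eta'(f)\nabla^kf(\textnormal{div}B)_k$: pointwise its integrand is not small and involves third derivatives of the curvature, so a naive estimate on the annulus $\{T\le f\le 2T\}$ would require curvature bounds that are not available. The point of the argument above is that one should not estimate this term but evaluate it exactly: properness makes the sublevel sets compact, and this is exactly what turns $\textnormal{div}^2B=0$ into the vanishing of the level-set flux $\Phi(t)$, killing the term for free. The role of properness is thus to license the divergence theorem on $\{f\le t\}$; without it the flux need not vanish. The same remark also recovers the compact case, which is simply the situation where every sublevel set is automatically compact.
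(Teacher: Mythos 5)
Your proof is correct and reaches the paper's conclusion, but it handles the crucial cutoff error term by a genuinely different mechanism. After the first integration by parts both arguments arrive at
$\tfrac12\int_M Q^{-1}|C|^2f^4\eta(f)=-\tfrac14\int_M f^4\eta'(f)\nabla^kf\,\nabla^i\nabla^jC_{jki}$.
The paper does not show this remainder vanishes; it feeds the weight $\phi(f)=f^5\dot\chi(f)$ back into Corollary \ref{corolario2} to convert the remainder into another weighted integral of $Q^{-1}|C|^2$, obtaining $\int_M Q^{-1}f^4|C|^2\bigl[\chi(f)+\tfrac14 f\dot\chi(f)\bigr]=0$ and then restricting to the sublevel set $M_s$ where the bracket equals $1$. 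You instead evaluate the remainder exactly: by the coarea formula it is a weighted average of the fluxes $\Phi(t)$ of the vector field $\textnormal{div}B$ through the regular level sets, and each $\Phi(t)$ vanishes by the divergence theorem on the compact sublevel set $\{f\le t\}$ because $\textnormal{div}^2B=0$. Your route buys several things: it makes the role of properness completely transparent (it is exactly what licenses the divergence theorem on $\{f\le t\}$), it needs less regularity of the cutoff than the paper's $C^3$ function, and it sidesteps the sign analysis of $\chi+\tfrac14 f\dot\chi$ on the annulus $\{s\le f\le 2s\}$, which the paper treats rather quickly (note that $\chi+\tfrac14 t\dot\chi=\tfrac14 t^{-3}(t^4\chi)'$ cannot be nonnegative on the whole annulus, since $t^4\chi$ must decrease from $s^4$ to $0$ there). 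One caveat to record: your divergence-theorem step tacitly assumes $\partial M=\emptyset$; if $\partial M=f^{-1}(0)\neq\emptyset$, then $\{f\le t\}$ has the additional boundary component $\partial M$ and $\Phi(t)$ equals the (constant in $t$, but not obviously zero) flux of $\textnormal{div}B$ through $\partial M$. The paper's proof faces an analogous issue, since Theorem \ref{teo3.3'} requires the support of $\phi(f)$ to avoid $\partial M$ while $\chi\equiv1$ near $f=0$; so in either approach the argument is clean for boundaryless $M$, and otherwise one must separately control the boundary contribution.
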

\begin{proof}
	Let $s>0$ be a real number fixed, and so we take $\chi\in C^3$ a real non-negative function defined by $\chi=1$ in {$[0,s]$}, $\chi'\leq0$ in $[s,2s]$ and $\chi=0$ in $[2s,+\infty]$. Since $f$ is a proper function, we have that $\phi(f)=f^4\chi(f)$ has compact support in $M$ for $s>0$. From Corollary \ref{corolario2}, we get
	\begin{eqnarray*}
		-\frac{1}{2}\int_M\dfrac{1}{Q}|C|^2f^4\chi(f)&=&\int_Mf^3\chi(f)\nabla^kf\nabla^i\nabla^jC_{jki}\\&=&\frac{1}{4}\int_M\chi(f)\nabla^if^4\nabla^k\nabla^jC_{jki}\\
		&=&-\frac{1}{4}\int_M\chi(f)f^4\nabla^i\nabla^k\nabla^jC_{jki}\\
		&+&\frac{1}{4}\int_M\dot{\chi}(f)f^4\nabla^if\nabla^k\nabla^jC_{jki}.
	\end{eqnarray*}
	
	In the last equality we used integration by parts. Now, since $\textnormal{div}^2 B=0$ and taking $\phi(f)=f^5\dot{\chi}(f)$ in Corollary \ref{corolario2} one more time, we obtain 
	\begin{eqnarray*}
		\frac{1}{2}\int_M\dfrac{1}{Q}|C|^2f^4\chi(f)&=&-\frac{1}{8}\int_M\dfrac{1}{Q}|C|^2f^5\dot{\chi}(f),
	\end{eqnarray*}
	i.e.,
	\begin{equation*}
		\begin{aligned}
			\int_M\dfrac{1}{Q}f^4|C|^2[\chi(f)+\frac{1}{4}f\dot{\chi}(f)]=0.
		\end{aligned}
	\end{equation*}
	
	Let be $M_s=\{x\in M; f(x)\leq s\}$. Thus, by definition, $\chi(f)+\frac{1}{4}f\dot{\chi}(f)=1$ on the compact set $M_s$. Thus, on $M_s$,
	$$\int_{M_s}\dfrac{1}{Q}f^4|C|^2=0.$$
	Therefore, since $Q$ has defined sign everywhere and $f$ is positive, $C=0$ in $M_s$. Taking $s\rightarrow+\infty$, we obtain that $C=0$ on $M$.
\end{proof}

\section{The Warped Product Structure}\label{warped}

In this section we will prove the warped product structure of a locally conformally flat electrostatic system following the ideas of \cite{cao, sun}. Consequently, the proof of Theorem \ref{fiber007-3} is given.

We consider an orthonormal frame $\{e_{1}, e_{2}, e_{3}\}$ diagonalizing the Ricci tensor $\textnormal{Ric}$ at a regular point $p\in\Sigma=f^{-1}(c)$, with associated eigenvalues $R_{kk}$, $k=1,\,2,\, 3,$ respectively. That is, $R_{ij}(p)=R_{ii}\delta_{ij}(p)$.
Now, from Theorem \ref{teoharmonico'} and Theorem \ref{proper'} we can infer that $V_{ijk}=0$ (since $(M,\,g)$ is locally conformally flat). Then, from Lemma \ref{ld1}, for all $i\neq j$ we get
\begin{eqnarray}\label{Vautovalor}
	0=V_{ijj}&=&f\rho|\nabla f|^2\nabla_i\rho   +(f^2\rho^2-1)(2R_{jj} + R_{ii})\nabla_if\nonumber\\
	&&+\left(R-\dfrac{1}{2}Rf^2\rho^2-f^2\rho^2\Lambda\right)\nabla_if.
\end{eqnarray}

Without loss of generalization, consider $\nabla_{i}f\neq0$ and $\nabla_{j}f=0$ for all $i\neq j$. Observe that $\textnormal{Ric}(\nabla f)=R_{ii}\nabla f$, i.e., $\nabla f$ is an eigenvector for $\textnormal{Ric}$. From \eqref{Vautovalor}, we obtain that $R_{ii}$ and $R_{jj}, \ j\neq i,$ have multiplicity $1$ and $2$, respectively. In fact,
\begin{eqnarray*}
	-f\rho|\nabla f|^2\frac{\nabla_1\rho}{\nabla_1f}  -\left(R-\dfrac{1}{2}Rf^2\rho^2-f^2\rho^2\Lambda\right)-(f^2\rho^2-1)R_{11}=2(f^2\rho^2-1)R_{jj},
\end{eqnarray*}
for $j=2,\,3.$

Moreover, suppose that $\nabla_{i}f\neq0$ for at least two distinct directions. Assume $\nabla_1f\neq0$, $\nabla_2f\neq0$ and $\nabla_3f=0$. So, for instance, we have
\begin{eqnarray*}
	-f\rho|\nabla f|^2\frac{\nabla_1\rho}{\nabla_1f}  -\left(R-\dfrac{1}{2}Rf^2\rho^2-f^2\rho^2\Lambda\right)-(f^2\rho^2-1)R_{11}=2(f^2\rho^2-1)R_{33}
\end{eqnarray*}
and
\begin{eqnarray*}
	-f\rho|\nabla f|^2\frac{\nabla_2\rho}{\nabla_2f}  -\left(R-\dfrac{1}{2}Rf^2\rho^2-f^2\rho^2\Lambda\right)-(f^2\rho^2-1)R_{22}=2(f^2\rho^2-1)R_{33}.
\end{eqnarray*}
Then, using that $\textnormal{curl}(fE)=0$, i.e., $$\nabla^k\rho\nabla^if=\nabla^i\rho\nabla^kf.$$ We can conclude that $$\frac{\nabla_1\rho}{\nabla_1f}=\frac{\nabla_2\rho}{\nabla_2f}.$$ Thus, $R_{11}=R_{22}.$ Analogously, if $\nabla_if\neq0$ for all $i\in\{1,\,2,\,3\}$. Then, $R_{11}=R_{22}=R_{33}.$ So, we can conclude that $Ric$ has at most two distinct eigenvalues $\lambda$ and $\mu$ with one of them having multiplicity $2$. 

Therefore, in any case we have that $\nabla f$ is an eigenvector for $\textnormal{Ric}$. From the above discussion we can take $\{e_{1}=\frac{\nabla f}{|\nabla f|},e_{2},\,e_{3}\}$ as an orthonormal frame for $\Sigma$ diagonalizing the Ricci tensor $\textnormal{Ric}$ for the metric $g$. 

Now, from \eqref{combinado} we have
\begin{eqnarray*}
	\nabla_a|\nabla f|^2=2f\left(R_{al}\nabla^l f+2\rho^2|\nabla f|^2\nabla_a f-\frac{R}{2}\nabla_a f\right);\quad a\in\{2,\,3\}.
\end{eqnarray*}
Hence, $|\nabla f|$ is a constant in $\Sigma$. Thus, we can express locally the metric $g$ in the form
\begin{eqnarray*}
	g_{ij} = \frac{1}{|\nabla f|^{2}}df^{2} + g_{ab}(f,\theta)d\theta_{a}d\theta_{b},
\end{eqnarray*}
where $g_{ab}(f, \theta)d\theta_{a}d\theta_{b}$ is the induced metric and $(\theta_{2},\,\theta_{3})$ is any local coordinate system on $\Sigma$. We can find a good overview of the level set structure in \cite{cao,benedito}.

Observe that there is no open subset $\Omega$ of $M^{n}$ where $\{\nabla f=0\}$ is dense. In fact, if $f$ is constant in $\Omega$ and $M^{n}$ is complete, we have that $f$ is analytic, which implies $f$ is constant everywhere. Thus, we consider $\Sigma$ a connected component of the level surface $f^{-1}(c)$ (possibly disconnected) where $c$ is any regular value of the function $f$. Suppose that $I$ is an open interval containing $c$ such that $f$ has no critical points in the open neighborhood $U_{I}=f^{-1}(I)$ of $\Sigma$. For sake of simplicity, let $U_{I}\subset M\backslash\{f=0\}$ be a connected component of $f^{-1}(I)$. Then, we can make a change of variables 
\begin{eqnarray*}
	r(x)=\int\frac{df}{|\nabla f|}
\end{eqnarray*}
such that the metric $g$ in $U_{I}$ can be expressed by
\begin{eqnarray*}
	g_{ij}=dr^{2}+g_{ab}(r,\theta)d\theta_{a}d\theta_{b}.
\end{eqnarray*}

Let $\nabla r=\frac{\partial}{\partial r}$, then $|\nabla r|=1$ and $\nabla f=f'(r)\frac{\partial}{\partial r}$ on $U_{I}$. Note that $f^{\prime}(r)$ does not change
sign on $U_{I}$. Thus, we may assume $I = (-\varepsilon,\,\varepsilon)$
with $f'(r)>0$ for $r\in I$. Moreover, we have $\nabla_{\partial r}\partial r=0.$

Then the second fundamental formula on $\Sigma$ is given by
\begin{eqnarray}\label{eq555}
	h_{ab}&=& - \langle e_{1},\,\nabla_{a}e_{b}\rangle=\frac{\nabla_{a}\nabla_{b}f}{|\nabla f|}\nonumber\\
	&=&\frac{1}{|\nabla f|}\left(f R_{ab}-\frac{Rf}{2}g_{ab}\right)=\frac{f}{|\nabla f|}\left(\mu-\frac{R}{2}\right)g_{ab}=\frac{H}{2}g_{ab},
\end{eqnarray}
  where $H=H(r)$, since $H$ is constant in $\Sigma$. In fact, contracting the Codazzi equation 
\begin{eqnarray*}
	R_{1cab}=\nabla_{a}h_{bc}-\nabla_{b}h_{ac}
\end{eqnarray*}
over $c$ and $b$, it gives
\begin{eqnarray*}
	R_{1a}=\nabla_{a}(H)-\frac{1}{2}\nabla_{a}(H)=\frac{1}{2}\nabla_{a}(H).
\end{eqnarray*}
On the other hand, since $R_{1a}=0,$ we conclude that $H$ is constant in $\Sigma$.

For what follows, we fix a local coordinate system
$$(x_{1},\, x_2,\, x_{3}) = (r,\,\theta_2,\, \theta_{3}) $$
in $U_{I}$, where $(\theta_{2},\theta_{3})$ is any local coordinate system on the level surface $\Sigma_{c}$. Considering that $a, b, c,\cdots\in\{2 , 3\}$, we have
\begin{eqnarray*}
	h_{ab}=-g(e_1,\, \nabla_{a}\partial_{b})=-g(e_1, \Gamma^{1}_{ab}\partial_{r})=\frac{-1}{|\nabla f|}\Gamma^{1}_{ab}.
\end{eqnarray*}
Now, by definition
\begin{eqnarray*}
	\Gamma^{1}_{ab}=\frac{1}{2}g^{11}\left(-\frac{\partial}{\partial r}g_{ab}\right)=\frac{1}{2}|\nabla f|\frac{\partial}{\partial r}g_{ab}.
\end{eqnarray*}
Then,
\begin{eqnarray*}
	\frac{\partial}{\partial r}g_{ab}= - H(r)g_{ab}
\end{eqnarray*}
implies that
\begin{eqnarray*}
	g_{ab}(r,\theta)=\varphi(r)^{2}g_{ab}(r_{0},\theta),
\end{eqnarray*}
where $\varphi(r)=e^{\left(-\int^{r}_{r_{0}}H(s)ds\right)}$ and the level set $\{r=r_{0}\}$  corresponds to the connected component $\Sigma$ of $f^{-1}(c)$.

Now, we can apply the warped product structure (see \cite{besse}). Hence, considering  $$(M^{3},\,g)=(I,\,dr^{2})\times_{\varphi}(N^{2},\,\overline{g}),$$ where $g=dr^2+\varphi^2\overline{g}.$ 
The Ricci tensor of $(M^3,\,g)$ is
\begin{eqnarray}\label{wps}
	R_{11}=-2\frac{\varphi''}{\varphi},\qquad R_{1a}=0
\end{eqnarray}
and
\begin{eqnarray*}
	R_{ab}=\overline{R}_{ab}-\left[(\varphi')^2+\varphi\varphi''\right]\bar{g}_{ab}\qquad  (a,\,b\in\{2,\,3\}).
\end{eqnarray*}
Since $\overline{R}_{ab}=\frac{\overline{R}}{2}\overline{g}_{ab}$,
\begin{eqnarray*}
	R_{ab}=\left[\frac{\overline{R}}{2}-(\varphi')^2-\varphi\varphi''\right]\overline{g}_{ab}.
\end{eqnarray*}
On the other hand, {since 
	\begin{eqnarray*}
		R=\varphi^{-2}\overline{R}-2\left(\frac{\varphi'}{\varphi}\right)^2-4\frac{\varphi''}{\varphi}, 
	\end{eqnarray*}
	we get}
 \begin{eqnarray*}\label{Rbar=R}
	\overline{R} = \varphi^2R+2(\varphi')^2+4\varphi\varphi''.
\end{eqnarray*}

Since $R=2(\rho^2|\nabla f|^2+\Lambda)$ we get
\begin{eqnarray}\label{barR}
	\overline{R} = 2\varphi^2\rho^2(f')^2 + 2(\varphi')^2 + 4\varphi\varphi'' + 2\varphi^2\Lambda.
\end{eqnarray}

Moreover, from \eqref{combinado} we know that
$$\frac{1}{|\nabla f|^2}\langle\nabla|\nabla f|^2,\,\nabla f\rangle=2f\left(R_{11}+2\rho^2|\nabla f|^2-\frac{R}{2}\right).$$ 
That is, from \eqref{rrr} and \eqref{wps} we get
\begin{eqnarray*}
	\langle\nabla|\nabla f|^2,\,\nabla f\rangle=2f(f')^2\left[\rho^2(f')^2-2\frac{\varphi''}{\varphi} - \Lambda\right].
\end{eqnarray*}
Hence, using that $\nabla f = f'\partial_r$ we obtain
\begin{eqnarray*}
	2(f')^2f''=2f(f')^2\left[\rho^2(f')^2-2\frac{\varphi''}{\varphi} - \Lambda\right].
\end{eqnarray*}
So,
\begin{eqnarray*}
	\rho^2=\frac{1}{(f')^2}\left[\frac{f''}{f}+2\frac{\varphi''}{\varphi}+\Lambda\right].
\end{eqnarray*}

Combining the above identity with \eqref{barR} we can conclude that $\overline{R}$ does not depend on $\theta$. Therefore, $\overline{R}$ is a constant.

Furthermore, from \eqref{eq555} we have
\begin{eqnarray*}
	\frac{1}{2}|\nabla f|Hg_{ab}=\nabla_a\nabla_bf=f\left(R_{ab}+2\rho^2\nabla_af\nabla_bf-\frac{R}{2}g_{ab}\right).
\end{eqnarray*}
Thus,
\begin{eqnarray*}
	\left(\frac{1}{2}|\nabla f|H+\frac{Rf}{2}\right)\varphi^2\overline{g}_{ab}=fR_{ab}.
\end{eqnarray*}
On the other hand, \begin{eqnarray*}
	fR_{ab}=f\left[\frac{\overline{R}}{2}-(\varphi')^2-\varphi\varphi''\right]\overline{g}_{ab}.
\end{eqnarray*}
Then,
\begin{eqnarray*}
	f\left[\frac{1}{2}\varphi^2R + \varphi\varphi''\right]=\left(\frac{1}{2}f'H+\frac{Rf}{2}\right)\varphi^
	2,
\end{eqnarray*}
i.e.,
\begin{eqnarray*}
	H=2\frac{f}{f'}\frac{\varphi''}{\varphi}.
\end{eqnarray*}

Since $\varphi=e^{-\int^r_{r_0} H(s)ds}$, we conclude
\begin{eqnarray*}
	\varphi' + 2\frac{f}{f'}\varphi''=0\quad\Rightarrow\quad\varphi'(r)=c_1{f(r)}^{-1/2},
\end{eqnarray*}
where $c_1\in\mathbb{R}.$

\bibliographystyle{amsplain}


\end{document}